\newtheorem{thm}{Theorem}[section]
\newtheorem{cor}{Corollary}[section]
\newtheorem{lem}{Lemma}[section]
\newtheorem{prop}{Proposition}[section]
\theoremstyle{definition}
\newtheorem{defn}{Definition}[section]
\theoremstyle{remark}
\newtheorem{eg}{Example}[section]
\DeclareMathOperator*{\dom}{\mathrm{dom}}
\let\im\Im
\DeclareMathOperator*{\PLUS}{\oplus} \allowdisplaybreaks
\begin{document}

\allowdisplaybreaks
\title[Isospectrality for graph Laplacians]{Isospectrality for graph Laplacians under the change of coupling at graph vertices}
\author{Yulia Ershova}
\address{Institute of Mathematics, National Academy of Sciences of Ukraine. 01601 Ukraine, Kiev-4,
3, Tereschenkivska st.} \email{julija.ershova@gmail.com}
\author{Irina I. Karpenko}
\address{Department of Algebra and Functional Analysis, V.I. Vernadsky Taurida National university. 95007 Simferopol, 4 Vernadsky pr.}
\email{i\_karpenko@ukr.net}
\author{Alexander V. Kiselev}
\address{Department of Functional Analysis, Pidstryhach Institute for Applied Problems of Mechanics and Mathematics,
National Academy of Sciences of Ukraine, 3-b Naukova Str. 79060,
L'viv, Ukraine}
\address{Department of Higher Mathematics and Mathematical Physics,
St. Petersburg State University, 1 Ulianovskaya Street, St.
Petersburg, St. Peterhoff 198504 Russia}
\email{alexander.v.kiselev@gmail.com}
\thanks{The third authors' work was partially supported by the RFBR, grant no. 12-01-00215-a.}
\subjclass[2000]{Primary 47A10; Secondary 34A55, 81Q35}

\keywords{Quantum graphs, Schr\"odinger operator, Laplace
operator, inverse spectral problem, trace formulae, boundary
triples, isospectral graphs}

\begin{abstract}
Laplacian operators on finite compact metric graphs are considered
under the assumption that matching conditions at graph vertices
are of $\delta$ and $\delta'$ types. An infinite set of trace
formulae is obtained which link together two different quantum
graphs under the assumption that their spectra coincide. The
general case of graph Schr\"odinger operators is also considered,
yielding the first trace formula. Tightness of results obtained
under no additional restrictions on edge lengths is demonstrated
by an example. Further examples are scrutinized when edge lengths
are assumed to be rationally independent. In all but one of these
impossibility of isospectral configurations is ascertained.
\end{abstract}
\maketitle

\section{Introduction}
In the present paper we focus our attention on the so-called
quantum graph, i.e., a metric graph $\Gamma$ with an associated
second-order differential operator acting in Hilbert space
$L^2(\Gamma)$ of square summable functions with an additional
assumption that functions belonging to the domain of the operator
are coupled by certain matching conditions at  graph vertices.
Recently these operators have attracted a considerable interest of
both physicists and mathematicians due to a number of important
physical applications. Extensive literature on the subject is
surveyed in, e.g., \cite{Kuchment,Kuchment2}.

The present paper is devoted to the study of an inverse spectral
problem for Laplace and Schr\"odinger operators on finite compact
metric graphs. One might classify the possible inverse problems on
quantum graphs in the following way: (i) given spectral data, edge
potentials and matching conditions, to reconstruct the metric
graph; (ii) given the metric graph, edge potentials and spectral
data, to reconstruct matching conditions; (iii) given the metric
graph, spectral data and matching conditions, to reconstruct edge
potentials.

There exists an extensive literature devoted to the problem
\emph{(i)}. To name just a few, we would like to mention
pioneering works \cite{Roth,Smil1,Smil2} and later contributions
\cite{Kura1,Kura2,Kura3,Kostrykin}. Different approaches to the
same problem were developed, e.g., in
\cite{pivo,Belishev_tree,Belishev_cycles}, see also
\cite{NabokoKurasov} for the analysis of a related problem. The
problem \emph{(iii)} is a generalization of the classical inverse
problem for Sturm-Liouville operators and thus unsurprisingly has
attracted by far the most interest; it is nonetheless beyond the
scope of the present paper.

The problem \emph{(ii)} has attracted much less attention. We
believe it was first treated in \cite{Carlson} for the square of
self-adjoint operator of the first derivative on a graph. Then,
after being mentioned in \cite{Kura2,Kura3}, it was treated in
\cite{Avdonin}, but only in the case of star graphs. In our papers
\cite{Yorzh1,Yorzh2} we suggested an approach based on the theory
of boundary triples, leading to the asymptotic analysis of
Weyl-Titchmarsh M-function of the graph.

Unlike \cite{Carlson,Avdonin}, in the present paper we consider
the case of a general connected compact finite metric graph (in
particular, this graph is allowed to possess cycles and loops),
but only for two possible classes of matching conditions at graph
vertices. Namely, each vertex is allowed to have matching of
either $\delta$ or $\delta'$ type (see Section 2 for definitions).
The named two classes singled out by us prove to be physically
viable \cite{Exner1, Exner2}.

\section{Preliminaries}

\subsection{Definition of the Laplace operator on a quantum
graph}\label{sect_def_laplace}

We call $\Gamma=\Gamma(\mathbf{E_\Gamma},\sigma)$ a finite compact
metric graph, if it is a collection of a finite non-empty set
$\mathbf{E_\Gamma}$ of compact intervals
$\Delta_j=[x_{2j-1},x_{2j}]$, $j=1,2,\ldots, n$, called
\emph{edges}, and of a partition $\sigma$ of the set of endpoints
$\{x_k\}_{k=1}^{2n}$ into $N$ classes,
$\mathbf{V_\Gamma}=\bigcup^N_{m=1} V_m$. The equivalence classes
 $V_m$, $m=1,2,\ldots,N$ will be called \emph{vertices} and the number of elements belonging to the set $V_m$ will be called the
 \emph{valence} (or, alternatively, \emph{degree}) of the vertex
$V_m$ (denoted $\deg V_m\equiv\gamma_m$).

With a finite compact metric graph $\Gamma$ we associate Hilbert
spaces $L_2(\Gamma)=\PLUS_{j=1}^n L_2(\Delta_j)$  and
$W_2^2(\Gamma)=\oplus_{j=1}^n W_2^2(\Delta_j).$ These spaces
obviously do not feel the graph connectivity, being the same for
each graph with the same number of edges of same lengths.

In what follows, we single out two natural \cite{Exner1} classes
of so-called \emph{matching conditions} which lead to a properly
defined self-adjoint operator on the graph $\Gamma$, namely,
matching conditions of $\delta$ and $\delta'$ types. In order to
describe these, we introduce the following notation. For a
function $f\in W_2^2(\Gamma)$, we will use throughout the
following definition of the normal derivative $\partial_nf(x_{j})$
at the endpoints of the interval $\Delta_k:$
\begin{equation*}\label{fdnfx}
\partial_n f(x_j)=\left\{ \begin{array}{rl} f'(x_j),&\mbox{ if } x_j \mbox{ is the left endpoint of the edge},\\
-f'(x_j),&\mbox{ if } x_j \mbox{ is the right endpoint of the
edge.}
\end{array}\right.
\end{equation*}

It will be convenient to introduce the following notation for a
function $f\in W_2^2(\Gamma)$ at any graph vertex:
\begin{equation*}
    f^\Sigma(V_k)=\sum\limits_{x_j\in V_k}f(x_j), \quad \partial_n^\Sigma f(V_k)=\sum\limits_{x_j\in V_k}\partial_n
    f(x_j).
\end{equation*}

Associate either of the two symbols, $\delta$ or $\delta'$, to
each vertex of the graph $\Gamma$. The graph thus obtained will be
referred to as \textit{marked} and denoted $\Gamma_{\delta}$. Any
marked graph  $\Gamma_{\delta}$ determines the lineal
$$
\mathcal{D}(\Gamma_{\delta}):=\left\{f\in W_2^2(\Gamma)\ \left|\
                                                 \begin{array}{l}
                                                   f\ \mbox{is continuous at all}\\ \mbox{internal vertices of }\ \delta \mbox{ type}, \\
                                                   \partial_nf(x_i)=\partial_nf(x_j) \forall i,j: x_i,x_j\in V \mbox{ at all}\\ \mbox{internal vertices $V$ of }\  \delta' \mbox{ type} \\
                                                 \end{array}                                          \right. \right\}.
$$
Note that the latter definition imposes no restrictions on the
functions from $\mathcal{D}(\Gamma_{\delta})$ at boundary vertices
of the graph, i.e., at vertices of valence 1. For reasons of
convenience, we refer to all graph vertices of higher valence as
\emph{internal vertices} throughout.

We remark that if the vertex $V_k$ of valence $\gamma_k$ is of
$\delta$ type, then obviously $f^\Sigma(V_k)=\gamma_kf(x_j),\ x_j
\in V_k.$ In the same way, for a vertex  $V_k$ of $\delta'$ type
one has $\partial_n^\Sigma f(V_k)=\gamma_k\partial_n f(x_j),\ x_j
\in V_k$. In fact, throughout the rest of the paper we will only
use the notation $f^\Sigma(V_k)$ and $\partial_n^\Sigma (V_k)$ in
these two respective cases.

In Hilbert space $L_2(\Gamma)$ consider the operator $A_{\min}$,
defined on each edge of the graph by the differential expression
$-\frac{d^2}{dx^2}$, the domain of which
$\mathrm{dom}(A_{\mathrm{min}})$ consists of all functions  $f\in
\mathcal{D}(\Gamma_{\delta})$ such that
\begin{align}\label{Eq_domAmin}
&  f^\Sigma(V_k)=0,\ \partial_n^\Sigma f(V_k)=0\ \forall k.
\end{align}

Obviously, $A_{\min}$ is a closed symmetric operator, which will
be henceforth referred to as \textit{the symmetric operator of the
graph} $\Gamma_{\delta}$. The adjoint to it $A_{\max}:=A^*_{\min}$
is defined by the same differential expression on the domain
$\mathcal{D}(\Gamma_{\delta})$. The deficiency indices of
$A_{\min}$ are equal to $(N,N)$.

We are now ready to define the Laplacian $A_{\vec\alpha}$ on the
graph $\Gamma_{\delta}$ which is an operator of the negative
second derivative on functions from
$f\in\mathcal{D}(\Gamma_{\delta})$ subject to the following
additional \emph{matching conditions}.
\begin{itemize}
\item[\textbf{($\delta$)}] If $V_k$ is of $\delta$ type, then
$$
\sum_{x_j \in V_k} \partial _n f(x_j)=\frac{\alpha_k}{\gamma_k}
f^\Sigma(V_k).
$$
\item[\textbf{($\delta'$)}] If $V_k$ is of $\delta'$ type, then
$$
\sum_{x_j \in V_k}
f(x_j)=-\frac{\alpha_k}{\gamma_k}\partial_n^\Sigma f(V_k).
$$
\end{itemize}
Here $\vec\alpha=(\alpha_1,\alpha_2,...\alpha_N)$ is a set of
arbitrary real constants which we will refer to as \emph{coupling
constants}.

Note that matching conditions at internal vertices reflect the
graph connectivity: if two graphs having the same set of edges
have different topology, the resulting operators are different.

Provided that all coupling constants $\alpha_m$, $m=1\dots N$, are
real, it is easy to ascertain that the operator $A_{\vec{\alpha}}$
is a proper self-adjoint extension of the operator $A_{\min}$ in
Hilbert space $L_2(\Gamma)$ \cite{Exner1,KostrykinS}.

Clearly, the self-adjoint operator thus defined on a finite
compact metric graph has purely discrete spectrum that accumulates
to $+\infty$.

Note that w.l.o.g. each edge $\Delta_j$ of the graph $\Gamma$ can
be considered to be an interval $[0,l_j]$, where
$l_j=x_{2j}-x_{2j-1}$, $j=1,\dots, n$ is the length of the
corresponding edge. Throughout the present paper we will therefore
only consider this situation.

In order to treat the inverse spectral problem (ii) for graph
Laplacians, we will first need to get an explicit expression for
 the generalized Weyl-Titchmarsh M-function of the operator considered. The most elegant and straightforward way of doing so is in our view
by utilizing the apparatus of boundary triples developed in
\cite{Gor,Ko1,Koch,DM}. We briefly recall what is essential for
our work.

\subsection{Boundary triples and the Weyl-Titchmarsh matrix
M-function}

Suppose that $A_{\min}$ is a symmetric densely defined closed
linear operator acting in Hilbert space $H$. Assume that
$A_{\min}$ is completely non-self-adjoint (simple)\footnote{The
condition of simplicity of $A_{\min}$ was studied in
\cite{Karpenko}, where necessary and sufficient conditions of this
property were obtained. In the simplest case, provided that all
edge lengths are rationally independent and the graph contains no
loops, simplicity is guaranteed. For the problems discussed in the
present paper, simplicity is in fact irrelevant. Indeed, even if
$A_{\min}$ is not simple, under the condition of isospectrality of
two graph Laplacians the formula \eqref{Eq_F/tildeFb} of Section 4
still holds, see below for details, which is sufficient for our
analysis, although in this case not all of the spectrum of a graph
Laplacian is ``visible'' to both factors of
\eqref{Eq_F/tildeFb}.}, i.e., there exists no reducing subspace
$H_0$ in $H$ such that the restriction $A_{\min}|H_0$ is a
selfadjoint operator in $H_0.$ Further assume that the deficiency
indices of $A_{\min}$ (possibly being infinite) are equal:
$n_+(A_{\min})=n_-(A_{\min})\le\infty.$

\begin{defn}[\cite{Gor,Ko1,DM}]\label{Def_BoundTrip}
Let $\Gamma_0,\ \Gamma_1$ be linear mappings of $\dom(A_{\max})$
to $\mathcal{H}$ -- a separable Hilbert space. The triple
$(\mathcal{H}, \Gamma_0,\Gamma_1)$ is called \emph{a boundary
triple} for the operator $A_{\max}$ if:
\begin{enumerate}
\item for all $f,g\in \dom(A_{\max})$
$$
(A_{\max} f,g)_H -(f, A_{\max} g)_H = (\Gamma_1 f, \Gamma_0
g)_{\mathcal{H}}-(\Gamma_0 f, \Gamma_1 g)_{\mathcal{H}}.
$$
\item the mapping $\gamma$ defined as $f\longmapsto (\Gamma_0 f;
\Gamma_1 f),$ $f\in \dom(A_{\max})$ is surjective, i.e., for all
$Y_0,Y_1\in\mathcal{H}$ there exists an element $y\in
\dom(A_{\max})$ such that $\Gamma_0 y=Y_0,\ \Gamma_1 y =Y_1.$
\end{enumerate}
\end{defn}

A boundary triple can be constructed for any operator $A_{\max}$
of the class considered. Moreover, the space $\mathcal H$ can be
chosen in a way such that $\dim \mathcal H=n_+=n_-.$ In particular
one has
$A_{\min}=A_{\max}|_{\left(\ker\Gamma_{0}\cap\ker\Gamma_{1}\right)}$.

We further single out two proper self-adjoint extensions of the
operator $A_{\min}$: $ A_{\infty}:=A_{\max}{|}\ker\Gamma_{0}$,
$A_{0}:=A_{\max}{|}\ker\Gamma_{1}. $

A nontrivial extension ${A}_B$ of the operator $A_{\min}$ such
that $A_{\min}\subset  A_B\subset A_{\max}$  is called
\emph{almost solvable} if for every $f\in \dom(A_{\max})$
$$
f\in \dom({A_B})\text{ if and only if } \Gamma_1 f=B\Gamma_0 f.
$$
for a bounded in $\mathcal H$ operator $B$.

The generalized Weyl-Titchmarsh M-function is then defined as
follows.
\begin{defn}[\cite{DM,Gor,Koch}]\label{DefWeylFunc}
Let $A_{\min}$ be a closed densely defined symmetric operator,
$n_+(A_{\min})=n_-(A_{\min}),$ $(\mathcal{H},\Gamma_0,\Gamma_1)$
being its boundary triple. The operator-function $M(\lambda),$
defined by
\begin{equation*}\label{Eq_Func_Weyl}
M(\lambda)\Gamma_0 f_{\lambda}=\Gamma_1 f_{\lambda}, \
f_{\lambda}\in \ker (A_{\max}-\lambda),\  \lambda\in
\mathbb{C}_\pm,
\end{equation*}
is called the Weyl-Titchmarsh M-function of a symmetric operator
$A_{\min}.$
\end{defn}

The following Theorem describing properties of the M-function
clarifies its meaning.
\begin{thm}[\cite{Gor,DM}, in the form adopted by \cite{RyzhovOTAA}]\label{Th_Spectra}
Let $M(\lambda)$ be the M-function of a symmetric operator
$A_{\min}$ with equal deficiency indices
($n_+(A_{\min})=n_-(A_{\min})<\infty$). Let $A_B$ be an almost
solvable extension of $A_{\min}$ corresponding to a bounded
operator $B.$ Then for every $\lambda\in \mathbb{C}:$
\begin{enumerate}
\item $M(\lambda)$ is an analytic operator-function when
$Im\lambda\not=0$, its values being bounded linear operators in
$\mathcal{H}.$ \item $(Im\ M(\lambda))Im\ \lambda>0$ when $Im
\lambda\not =0.$ \item $M(\lambda)^*=M(\overline{\lambda})$ when
$Im \lambda\not =0.$ \item $\lambda_0\in \rho(A_B)$ if and only if
$(B-M(\lambda))^{-1}$ admits analytic continuation into the point
$\lambda_0$.
\end{enumerate}
\end{thm}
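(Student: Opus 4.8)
The plan is to derive all four properties from two structural facts: the abstract Green (Lagrange) identity built into item (1) of Definition~\ref{Def_BoundTrip}, and the observation that for every $\lambda$ in the resolvent set of the self-adjoint reference operator $A_\infty=A_{\max}|\ker\Gamma_0$ --- in particular for every $\lambda$ with $\operatorname{Im}\lambda\neq0$ --- the restriction of $\Gamma_0$ to the defect subspace $\mathcal{N}_\lambda:=\ker(A_{\max}-\lambda)$ is a bijection onto $\mathcal{H}$. The latter follows from the direct (von Neumann) decomposition $\dom(A_{\max})=\dom(A_\infty)\dotplus\mathcal{N}_\lambda$: surjectivity is inherited from item (2) of Definition~\ref{Def_BoundTrip}, while injectivity holds because $\Gamma_0 f_\lambda=0$ with $f_\lambda\in\mathcal{N}_\lambda$ would place $f_\lambda\in\dom(A_\infty)\cap\ker(A_{\max}-\lambda)=\{0\}$. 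I would then set $\gamma(\lambda):=(\Gamma_0|_{\mathcal{N}_\lambda})^{-1}$, so that by definition $M(\lambda)=\Gamma_1\gamma(\lambda)$.

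Properties (2) and (3) then drop out of the Lagrange identity. For (3) I would apply item (1) to $f=f_\lambda\in\mathcal{N}_\lambda$ and $g=g_{\bar\lambda}\in\mathcal{N}_{\bar\lambda}$; its left-hand side vanishes since $(A_{\max}f_\lambda,g_{\bar\lambda})-(f_\lambda,A_{\max}g_{\bar\lambda})=(\lambda-\overline{\bar\lambda})(f_\lambda,g_{\bar\lambda})=0$, and substituting $\Gamma_1 f_\lambda=M(\lambda)\Gamma_0 f_\lambda$, $\Gamma_1 g_{\bar\lambda}=M(\bar\lambda)\Gamma_0 g_{\bar\lambda}$ yields $(M(\lambda)u,v)_{\mathcal{H}}=(u,M(\bar\lambda)v)_{\mathcal{H}}$ as $u=\Gamma_0 f_\lambda$, $v=\Gamma_0 g_{\bar\lambda}$ run through all of $\mathcal{H}$, i.e. $M(\lambda)^*=M(\bar\lambda)$. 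For (2), taking $f=g=f_\lambda$ gives $2i\operatorname{Im}\lambda\,\|f_\lambda\|^2=(M(\lambda)u,u)_{\mathcal{H}}-(u,M(\lambda)u)_{\mathcal{H}}=2i\,((\operatorname{Im}M(\lambda))u,u)_{\mathcal{H}}$ with $u=\Gamma_0 f_\lambda$; since $u\mapsto f_\lambda=\gamma(\lambda)u$ is a bijection, $\|f_\lambda\|^2>0$ for $u\neq0$, and the claimed sign of $(\operatorname{Im}M(\lambda))\operatorname{Im}\lambda$ follows. For (1), boundedness is immediate because $\dim\mathcal{H}=n_+=n_-<\infty$, so $M(\lambda)$ is a finite matrix; analyticity I would obtain from the resolvent representation of the $\gamma$-field, $\gamma(\lambda)=[I+(\lambda-\mu)(A_\infty-\lambda)^{-1}]\gamma(\mu)$ for $\lambda,\mu\in\rho(A_\infty)$, which exhibits $\lambda\mapsto\gamma(\lambda)$, and hence $M(\lambda)=\Gamma_1\gamma(\lambda)$, as analytic wherever the resolvent of $A_\infty$ is, in particular off the real axis.

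Property (4) is where the real work lies, and I would base it on the Krein resolvent formula, proved by the following solvability computation. For $\lambda\in\rho(A_\infty)$ and arbitrary $h$, I seek $u\in\dom(A_B)$ with $(A_{\max}-\lambda)u=h$. Writing $u=(A_\infty-\lambda)^{-1}h+f_\lambda$ with $f_\lambda\in\mathcal{N}_\lambda$ solves the differential equation automatically, and the matching condition $\Gamma_1 u=B\Gamma_0 u$ collapses, using $\Gamma_0 u=\Gamma_0 f_\lambda$ and $\Gamma_1 f_\lambda=M(\lambda)\Gamma_0 f_\lambda$, to $(B-M(\lambda))\Gamma_0 f_\lambda=\Gamma_1(A_\infty-\lambda)^{-1}h$. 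Thus, for $\lambda\in\rho(A_\infty)$, membership $\lambda\in\rho(A_B)$ is equivalent to bounded invertibility of $B-M(\lambda)$, and one reads off $(A_B-\lambda)^{-1}=(A_\infty-\lambda)^{-1}+\gamma(\lambda)(B-M(\lambda))^{-1}\gamma(\bar\lambda)^*$ (using the standard identity $\Gamma_1(A_\infty-\lambda)^{-1}=\gamma(\bar\lambda)^*$).

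The remaining, and in my view hardest, point is to pass from $\lambda\in\rho(A_\infty)$ to an arbitrary $\lambda_0$, possibly real and in $\sigma(A_\infty)$. Here I would use the resolvent formula to argue that holomorphy of $\lambda\mapsto(A_B-\lambda)^{-1}$ near $\lambda_0$ is forced by, and forces, the analytic continuability of $(B-M(\lambda))^{-1}$ into $\lambda_0$: once the middle factor is continued, the remaining factors $(A_\infty-\lambda)^{-1}$ and $\gamma(\lambda),\gamma(\bar\lambda)^*$ contribute only removable combinations, whereas a genuine failure of the continuation produces a pole of the resolvent, i.e. a point of $\sigma(A_B)$. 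The delicate bookkeeping is to ensure that apparent singularities of the individual factors at points of $\sigma(A_\infty)$ cancel and do not masquerade as spectrum of $A_B$; for this precise accounting I would follow the form of the statement adopted in \cite{RyzhovOTAA}.
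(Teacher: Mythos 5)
The paper offers no proof of this theorem: it is imported from \cite{Gor,DM} in the form adopted by \cite{RyzhovOTAA}, so there is nothing internal to compare you against. Judged on its own terms, your argument for items (1)--(3) is correct and is the standard one: bijectivity of $\Gamma_0|_{\mathcal{N}_\lambda}$ for $\lambda\in\rho(A_\infty)$ via the decomposition $\dom(A_{\max})=\dom(A_\infty)\dotplus\mathcal{N}_\lambda$, the two specializations of the Green identity, and the resolvent identity for the $\gamma$-field are exactly the right ingredients, and in the finite-deficiency setting of the theorem all the boundedness and continuity issues you gesture at are harmless. The Krein-formula computation establishing item (4) for $\lambda_0\in\rho(A_\infty)$ --- hence for all non-real $\lambda_0$ --- is also correct.

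The genuine gap is the step you explicitly postpone: item (4) at a (necessarily real) point $\lambda_0\in\sigma(A_\infty)$. This is not bookkeeping that can be outsourced, because the implication ``$(B-M(\lambda))^{-1}$ continues analytically into $\lambda_0$ $\Longrightarrow$ $\lambda_0\in\rho(A_B)$'' is \emph{false} without the standing hypothesis, stated in the paper immediately before Definition~\ref{Def_BoundTrip} and nowhere used in your proof, that $A_{\min}$ is completely non-self-adjoint. If $A_{\min}$ had a self-adjoint reducing part $A_s$, then $\sigma(A_s)$ would be contained in $\sigma(A_B)$ for every $B$, while $M(\lambda)$, which is blind to $A_s$, could continue analytically through such points; moreover these points lie in $\sigma(A_\infty)$, i.e.\ exactly in the region your argument does not reach, and the paper's own footnote on simplicity describes precisely this phenomenon. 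A completion must therefore invoke simplicity --- for instance in the form $\overline{\mathrm{span}}\{\mathcal{N}_\lambda:\operatorname{Im}\lambda\neq0\}=H$ --- to guarantee that a singularity of $(A_B-\lambda)^{-1}$ at $\lambda_0$ cannot be annihilated by the outer factors $\gamma(\lambda)$ and $\gamma(\bar\lambda)^*$ in the Krein formula. So the ``delicate bookkeeping'' you defer to \cite{RyzhovOTAA} is exactly where both the content and the one nontrivial hypothesis of the theorem live.
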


\section{Weyl-Titchmarsh function for the graph Laplacian}
In this Section we derive an explicit formula for Weyl-Titchmarsh
M-function pertaining to graph Laplacians of the class considered.

\begin{thm}\label{Th_Bound_Triple}
Let $A_{\min}$ be the symmetric operator of the graph
$\Gamma_{\delta}$ defined in \eqref{Eq_domAmin}. Then a boundary
triple for the operator $A_{\max}$ such that any proper
self-adjoint extension $A_{\vec\alpha}$ of Section
\ref{sect_def_laplace} is almost solvable can be defined as
follows: $\mathcal{H}=\mathbb{C}^N,$
\begin{equation}\label{Eq_Gamma_0}
     (\Gamma_0f)_k:=\frac{1}{\gamma_k}
     \begin{cases}
        f^\Sigma(V_k),
            &\ \mbox{if}\  V_k \mbox{ is a vertex of}\ \delta\mbox{ type},\\
        \partial_n^\Sigma f(V_k),
            &\ \mbox{if}\  V_k  \mbox{ is a vertex of}\
            \delta'\mbox{ type},
     \end{cases}
\end{equation}
\begin{equation}\label{Eq_Gamma_1}
     (\Gamma_1f)_k:=
     \begin{cases}
        \partial_n^\Sigma f(V_k),
            &\ \mbox{if}\  V_k  \mbox{ is a vertex of}\ \delta\mbox{ type},\\
        -f^\Sigma(V_k),
            &\ \mbox{if}\  V_k  \mbox{ is a vertex of}\
            \delta'\mbox{ type},
            \end{cases}
\end{equation}
where $\gamma_k=\mathrm{deg}~V_k$.

In this setting, the matrix $B$ parameterizing the almost solvable
extension  $A_{\vec\alpha}$ is diagonal,
$B=\mathrm{diag}\{\alpha_1,\alpha_2,\dots,\alpha_N\}$, where
$\{\alpha_k\}_{k=1}^N$ are coupling constants of conditions
$(\delta)$ and $(\delta')$.
\end{thm}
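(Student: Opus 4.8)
The plan is to verify directly the two defining properties of a boundary triple from Definition \ref{Def_BoundTrip} for the pair $(\Gamma_0,\Gamma_1)$ given by \eqref{Eq_Gamma_0}--\eqref{Eq_Gamma_1} with $\mathcal H=\mathbb C^N$, and then to read off the parameterizing matrix $B$ of the extension $A_{\vec\alpha}$ directly from the matching conditions $(\delta)$, $(\delta')$. Since $A_{\max}$ acts as $-d^2/dx^2$ on each edge, the natural starting point is integration by parts on each $\Delta_j=[0,l_j]$ separately.

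First I would establish the abstract Green identity (item (1) of Definition \ref{Def_BoundTrip}). Integrating by parts twice on a single edge gives, for $f,g\in\mathcal D(\Gamma_\delta)$,
\[
\int_{\Delta_j}\bigl(-f''\bar g+f\bar g''\bigr)\,dx=\bigl[f\bar g'-f'\bar g\bigr]_0^{l_j},
\]
and rewriting the endpoint terms through the normal derivative $\partial_n$ turns each boundary contribution into $\partial_n f(x_k)\,\overline{g(x_k)}-f(x_k)\,\overline{\partial_n g(x_k)}$. Summing over all edges and regrouping the $2n$ endpoint terms according to the vertex $V_k$ each endpoint belongs to, one obtains $(A_{\max}f,g)_H-(f,A_{\max}g)_H=\sum_{k=1}^N\sum_{x_j\in V_k}\bigl(\partial_n f(x_j)\overline{g(x_j)}-f(x_j)\overline{\partial_n g(x_j)}\bigr)$. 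The crux is to evaluate the inner sum using the constraints built into $\mathcal D(\Gamma_\delta)$: at a $\delta$-vertex, continuity lets one replace $g(x_j)$ by $g^\Sigma(V_k)/\gamma_k$ and factor it out, producing exactly $(\Gamma_1 f)_k\overline{(\Gamma_0 g)_k}-(\Gamma_0 f)_k\overline{(\Gamma_1 g)_k}$; at a $\delta'$-vertex, equality of normal derivatives lets one factor $\partial_n f(x_j)=\partial_n^\Sigma f(V_k)/\gamma_k$ instead, again reproducing $(\Gamma_1 f)_k\overline{(\Gamma_0 g)_k}-(\Gamma_0 f)_k\overline{(\Gamma_1 g)_k}$ once the sign in \eqref{Eq_Gamma_1} is accounted for. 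This yields item (1) verbatim.

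The step I expect to require the most care is the surjectivity of $f\mapsto(\Gamma_0 f,\Gamma_1 f)$ onto $\mathbb C^N\times\mathbb C^N$ (item (2)). I would argue it by a degrees-of-freedom construction: on each edge the evaluation $h\mapsto(h(0),h'(0),h(l_j),h'(l_j))$ maps $W_2^2([0,l_j])$ onto $\mathbb C^4$, so the full collection of endpoint data $\{f(x_k),\partial_n f(x_k)\}$ may be prescribed freely subject only to membership in $\mathcal D(\Gamma_\delta)$. Fixing target vectors $Y_0,Y_1\in\mathbb C^N$, at each $\delta$-vertex I set the common value of $f$ equal to $(Y_0)_k$ and distribute the normal derivatives so that $\partial_n^\Sigma f(V_k)=(Y_1)_k$; at each $\delta'$-vertex I set the common normal derivative equal to $(Y_0)_k$ and distribute the endpoint values so that $f^\Sigma(V_k)=-(Y_1)_k$. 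Because distinct vertices involve disjoint sets of endpoints and each edge supplies four independent parameters, these prescriptions are mutually consistent (loops, where both endpoints of an edge meet one vertex, being covered by the same count), and the resulting $f$ satisfies $\Gamma_0 f=Y_0$, $\Gamma_1 f=Y_1$. As a consistency check one verifies along the way that $\ker\Gamma_0\cap\ker\Gamma_1$ forces $f^\Sigma(V_k)=\partial_n^\Sigma f(V_k)=0$ for every $k$, which is precisely \eqref{Eq_domAmin}, so $A_{\min}=A_{\max}|_{\ker\Gamma_0\cap\ker\Gamma_1}$.

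Finally, to identify the extension I would simply rewrite the matching conditions of Section \ref{sect_def_laplace} in terms of $\Gamma_0,\Gamma_1$. At a $\delta$-vertex the condition $\partial_n^\Sigma f(V_k)=\tfrac{\alpha_k}{\gamma_k}f^\Sigma(V_k)$ reads $(\Gamma_1 f)_k=\alpha_k(\Gamma_0 f)_k$, and at a $\delta'$-vertex the condition $f^\Sigma(V_k)=-\tfrac{\alpha_k}{\gamma_k}\partial_n^\Sigma f(V_k)$ likewise reads $(\Gamma_1 f)_k=\alpha_k(\Gamma_0 f)_k$. Hence $f\in\dom(A_{\vec\alpha})$ if and only if $\Gamma_1 f=B\Gamma_0 f$ with $B=\mathrm{diag}\{\alpha_1,\dots,\alpha_N\}$, a bounded (indeed self-adjoint, as all $\alpha_k$ are real) operator in $\mathbb C^N$; thus $A_{\vec\alpha}$ is almost solvable with the asserted diagonal $B$, which completes the proof.
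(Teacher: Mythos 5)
Your argument is correct and follows essentially the same route as the paper: the paper's proof consists precisely of observing the vertex-wise Green identity $(A_{\max}f,g)-(f,A_{\max}g)=\sum_{k=1}^N\frac{1}{\gamma_k}\bigl(\partial_n^\Sigma f(V_k)\overline{g^\Sigma(V_k)}-f^\Sigma(V_k)\overline{\partial_n^\Sigma g(V_k)}\bigr)$ and reading off $B=\mathrm{diag}\{\alpha_1,\dots,\alpha_N\}$ from the matching conditions, which is exactly what you do. Your additional explicit verification of surjectivity and of $\ker\Gamma_0\cap\ker\Gamma_1=\dom(A_{\min})$ merely fills in details the paper leaves implicit.
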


\begin{proof}
It suffices to observe that
\begin{equation*}\label{Eq_Delta}
          (A_{\max}f,g)-(f,A_{\max}g)=\sum\limits_{k=1}^N\frac{1}{\gamma_k}\Bigl(\partial_n^\Sigma f(V_{k})\overline{g^\Sigma(V_{k})}-
          f^\Sigma(V_{k})\overline{\partial_n^\Sigma g(V_{k})}\Bigr).
\end{equation*}
Then formulae~\eqref{Eq_Gamma_0},~\eqref{Eq_Gamma_1} and
conditions  $(\delta)$ and $(\delta')$, immediately imply the
claimed form of the matrix $B$.
\end{proof}

\begin{lem}\label{Lem_Delta_Vertice}
Assume that $V_k,\ V_j$ are two adjacent vertices of the graph
$\Gamma_{\delta}$ connected by an edge $e_{kj}$ of length
$l_{kj}$, $\gamma_k=\mathrm{deg}~V_k$. Let the function $f$ be in
$\ker (A_{\max}-\lambda)$ and put $f_{kj}:=f| e_{kj}$.

\textbf{(i)} If $V_k,\ V_j$ are both of $\delta$ type and
$f^\Sigma(V_k)=\gamma_k,\ f^\Sigma(V_j)=0,$ then
\begin{equation}\label{Eq_Delta_End}
\partial_nf_{kj}(V_k)=-\mu\cot\mu l_{kj},\ \partial_nf_{kj}(V_j)=\frac{\mu}{\sin\mu l_{kj}}
\end{equation}

\textbf{(ii)} If $V_k,\ V_j$ are two vertices of $\delta$ and
$\delta'$ types, respectively, and  $f^\Sigma(V_k)=\gamma_k,\
\partial_n^\Sigma f(V_j)=0,$ then
\begin{equation}\label{Eq_Delta'_End}
\partial_nf_{kj}(V_k)=\mu\tan\mu l_{kj},\ f_{kj}(V_j)=\frac{1}{\cos\mu l_{kj}}.
\end{equation}

\textbf{(iii)} If $V_k,\ V_j$ are two vertices of $\delta'$ and
$\delta$ types, respectively, and $\partial_n^\Sigma
f(V_k)=\gamma_k,\ f^\Sigma(V_j)=0,$ then
\begin{equation}\label{Eq_Delta_End_Delta'}
f_{kj}(V_k)=-\frac{1}{\mu}\tan\mu l_{kj},\ \partial_nf_{kj}(V_j)=-\frac{1}{\cos\mu l_{kj}}.
\end{equation}

\textbf{(iv)} If $V_k,\ V_j$ are both of $\delta'$ type and
$\partial_n^\Sigma f(V_k)=1,\
\partial_n^\Sigma f(V_j)=0,$ then
\begin{equation}\label{Eq_Delta'_End_Delta'}
f_{kj}(V_k)=\frac{1}{\mu}\cot\mu l_{kj},\
f_{kj}(V_j)=\frac{1}{\mu\sin\mu l_{kj}}.
\end{equation}
Here $\mu=\sqrt{\lambda}$ with the branch so chosen that $\Im
\mu\geq 0$.
\end{lem}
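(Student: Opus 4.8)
The plan is to reduce each of the four cases to a scalar two-point boundary value problem on the single edge $e_{kj}$ and to solve it explicitly. First I would fix the parametrization of $e_{kj}$ as $[0,l_{kj}]$, placing the endpoint lying in $V_k$ at $x=0$ and the one lying in $V_j$ at $x=l_{kj}$. Since $f\in\ker(A_{\max}-\lambda)$, its restriction $f_{kj}$ satisfies $-f_{kj}''=\lambda f_{kj}$ on $(0,l_{kj})$, so with $\mu=\sqrt{\lambda}$ one has the general solution $f_{kj}(x)=a\cos\mu x+b\sin\mu x$ for constants $a,b$ to be determined. Recalling the normal-derivative convention, at the left endpoint $x=0$ (the endpoint in $V_k$) one has $\partial_n f_{kj}(V_k)=f_{kj}'(0)$, whereas at the right endpoint $x=l_{kj}$ (the endpoint in $V_j$) one has $\partial_n f_{kj}(V_j)=-f_{kj}'(l_{kj})$; this sign flip is the one place where care is needed.

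The second step is to convert the prescribed vertex data into Dirichlet or Neumann data for this single edge. At a vertex of $\delta$ type, membership $f\in\mathcal{D}(\Gamma_{\delta})$ forces $f$ to take one common value at all its endpoints, so $f^\Sigma(V)=\gamma\,f_{kj}(V)$, and a prescription $f^\Sigma(V_k)=\gamma_k$ (resp. $f^\Sigma(V_j)=0$) is equivalent to the Dirichlet datum $f_{kj}(V_k)=1$ (resp. $f_{kj}(V_j)=0$) on the chosen edge. Dually, at a vertex of $\delta'$ type all normal derivatives coincide, so $\partial_n^\Sigma f(V)=\gamma\,\partial_n f_{kj}(V)$, and the prescriptions on $\partial_n^\Sigma f$ translate into Neumann data for $f_{kj}$ at the corresponding endpoint (in case (iv) the normalization is to be read so that the single-edge data are $\partial_n f_{kj}(V_k)=1$ and $\partial_n f_{kj}(V_j)=0$). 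Each of the four cases thus produces exactly two scalar conditions, one at $x=0$ and one at $x=l_{kj}$, of mixed Dirichlet/Neumann type according to the types of $V_k$ and $V_j$.

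The third step is the elementary calculation. Substituting the general solution into the two boundary conditions yields a $2\times2$ linear system for $a,b$; solving it and then evaluating either $f_{kj}$ or $f_{kj}'$ (with the appropriate sign) at the two endpoints gives the eight quantities on the right-hand sides of (i)--(iv). For instance, in case (i) the conditions $f_{kj}(0)=1$, $f_{kj}(l_{kj})=0$ give $a=1$, $b=-\cot\mu l_{kj}$, whence $f_{kj}'(0)=b\mu=-\mu\cot\mu l_{kj}$ and, after using $\cos^2\mu l_{kj}+\sin^2\mu l_{kj}=1$, $-f_{kj}'(l_{kj})=\mu/\sin\mu l_{kj}$; the remaining three cases are handled identically, merely interchanging which endpoint carries Dirichlet and which carries Neumann data. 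The only genuine bookkeeping hazard, and hence the main (if modest) obstacle, is keeping the minus sign in $\partial_n f_{kj}(V_j)=-f_{kj}'(l_{kj})$ and the normalization factors $1/\gamma_k$ straight across all four configurations; once the boundary data are correctly read off, each identity follows by a one-line trigonometric simplification.
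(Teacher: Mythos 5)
Your proof is correct and is precisely the explicit computation the paper invokes (the paper offers no details beyond ``the proof is an explicit computation''): reduce to a single edge, translate the vertex data into Dirichlet data at $\delta$ endpoints and Neumann data at $\delta'$ endpoints via the continuity conditions built into $\mathcal{D}(\Gamma_{\delta})$, and solve the resulting $2\times 2$ system for $a\cos\mu x+b\sin\mu x$ with the sign convention $\partial_n f_{kj}(V_j)=-f_{kj}'(l_{kj})$ at the right endpoint. Your parenthetical reading of the normalization in case (iv) --- taking the edge-level datum to be $\partial_n f_{kj}(V_k)=1$ --- is the one consistent with the displayed right-hand sides, since a literal reading of $\partial_n^\Sigma f(V_k)=1$ at a $\delta'$ vertex would give $\partial_n f_{kj}(V_k)=1/\gamma_k$ and hence an extra factor of $1/\gamma_k$ in \eqref{Eq_Delta'_End_Delta'}.
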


The \emph{proof} is an explicit computation.

\begin{thm}\label{Th_Weyl_Func}
Let $\Gamma_{\delta}$ be a marked compact metric graph and let the
operator $A_{\min}$ be the symmetric operator \eqref{Eq_domAmin}
of the graph $\Gamma_{\delta}$. Assume that the boundary triple
for $A_{\max}$ is $(\mathbb{C}^N,\ \Gamma_0,\ \Gamma_1)$, where
$N$ is the number of vertices in  $\Gamma$, whereas the operators
$\Gamma_0$ and $\Gamma_1$ are defined by
~\eqref{Eq_Gamma_0},~\eqref{Eq_Gamma_1}. Then the generalized
Weyl-Titchmarsh $M$-function is a $N\times N$ matrix with matrix
elements given by the following formula for a vertex  $V_k$ of
$\delta$ type: $m_{jk}(\lambda)=$
\begin{equation}\label{Eq_Weyl_Func_Delta}
\begin{cases}\scriptsize
-\mu\Bigl(\sum_{\Delta_t\in E_k}\cot\mu l_t-\sum_{\Delta_t\in
E'_k}\tan\mu l_t-    \\
-2\sum_{\Delta_t\in L_k}\tan\frac{\mu l_t}{2}\Bigr),
            & j=k,\\
\mu\sum_{\Delta_t\in C_{kj}}\frac{1}{\sin\mu l_t},
            & j\not=k,\  V_j \mbox{ is a vertex of }\\
            & \delta\mbox{ type adjacent to}\ V_k,\\
-\sum_{\Delta_t\in C'_{kj}}\frac{1}{\cos\mu l_t},
            & j\not=k,\  V_j \mbox{ is a vertex of }\\
            & \delta'\mbox{ type adjacent to }\ V_k,\\
            0,
            & j\not=k,\  V_j \mbox{ is a vertex }\\
            & \mbox{ not adjacent to }\ V_k,\\
     \end{cases}
\end{equation}
and by the following formula for a vertex $V_k$ of $\delta'$ type:
$m_{jk}(\lambda)=$
\begin{equation}\label{Eq_Weyl_Func_Delta'}
\begin{cases}
-\frac{1}{\mu}\Bigl(\sum_{\Delta_t\in E_k}\cot\mu
l_t-\sum_{\Delta_t\in E'_k}\tan\mu
l_t+\\
+2\sum_{\Delta_t\in L_k}\cot\frac{\mu
 l_t}{2}\Bigr),  & j=k,\\
-\sum_{\Delta_t\in C'_{kj}}\frac{1}{\cos\mu l_t},
            & j\not=k,\  V_j \mbox{ is a vertex of }\\
            & \delta\mbox{ type adjacent to }\ V_k,\\
-\frac{1}{\mu}\sum_{\Delta_t\in C_{kj}}\frac{1}{\sin\mu l_t},
            & j\not=k,\  V_j  \mbox{ is a vertex of }\\
            & \delta'\mbox{ type adjacent to }\ V_k,\\
            0,
            & j\not=k,\  V_j \mbox{ is a vertex}\\
            & \mbox{not adjacent to }\ V_k.\\
     \end{cases}
\end{equation}
Here $\mu=\sqrt{\lambda}$ (the branch such that $\im\mu\geq 0$),
$l_t$ is the length of the edge $\Delta_t$, $L_k$ is the set of
loops at the vertex $V_k,$ $E_k$ is the set of graph edges
incident to the vertex $V_k$ with both endpoints of the same type,
$E'_k$ is the set of graph edges incident to the vertex $V_k$ with
endpoints of different type, $C_{kj}$ is the set of graph edges
connecting vertices $V_k$ and $V_j$ of the same type, and finally,
$C'_{kj}$ is the set of graph edges connecting vertices  $V_k$ and
$V_j$ of different types.
\end{thm}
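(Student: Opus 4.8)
The plan is to compute the M-function directly from its defining relation $M(\lambda)\Gamma_0 f_\lambda = \Gamma_1 f_\lambda$ by exhibiting, for each vertex $V_k$, a particular solution $f_\lambda \in \ker(A_{\max}-\lambda)$ whose $\Gamma_0$-data is the $k$-th standard basis vector of $\mathbb{C}^N$, and then reading off $\Gamma_1 f_\lambda$, which by definition yields the $k$-th column $(m_{jk})_{j=1}^N$ of $M(\lambda)$. First I would fix $k$ and consider the boundary condition $(\Gamma_0 f_\lambda)_j = \delta_{jk}$ for all $j$. Concretely, this means prescribing $f^\Sigma(V_k)=\gamma_k$ (if $V_k$ is of $\delta$ type) or $\partial_n^\Sigma f(V_k)=\gamma_k$ (if $V_k$ is of $\delta'$ type), while at every other vertex the corresponding $\delta$- or $\delta'$-datum vanishes. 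Since $f_\lambda$ lies in $\ker(A_{\max}-\lambda)$, on each edge it is a solution of $-f''=\lambda f$, hence a combination of $\cos\mu x$ and $\sin\mu x$; the vertex data then decouple edge-by-edge.

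The key observation that makes this tractable is that Lemma \ref{Lem_Delta_Vertice} has already done the single-edge bookkeeping. For an edge $e_{kj}$ joining $V_k$ to an adjacent vertex $V_j$, the four cases (i)--(iv) of the Lemma give exactly the boundary values of the restriction $f_{kj}$ under precisely the normalization we have imposed: unit (or $\gamma_k$-weighted) data at the $V_k$ end and zero data at the $V_j$ end, matching the four possible type-combinations of the endpoints. Thus the plan is to sum the edge contributions. For the diagonal entry $m_{kk}$, I would collect $\partial_n^\Sigma f(V_k)$ (resp.\ $-f^\Sigma(V_k)$) by summing $\partial_n f_{kj}(V_k)$ over all edges incident to $V_k$: ordinary edges with endpoints of the same type contribute the $\cot$ terms from (i)/(iv), edges with endpoints of different type contribute the $\tan$ terms from (ii)/(iii), and loops at $V_k$ require special treatment. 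For the off-diagonal entry $m_{jk}$ with $V_j$ adjacent to $V_k$, I would read off the data at the $V_j$ end from the second equality in the relevant case of the Lemma, summing over all edges in $C_{kj}$ or $C'_{kj}$; when $V_j$ is not adjacent to $V_k$ the prescribed solution is identically zero there, giving $m_{jk}=0$.

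The main obstacle, and the one genuinely new computation, is the loop contribution, which Lemma \ref{Lem_Delta_Vertice} does not directly cover since it treats edges joining two \emph{distinct} vertices. A loop at $V_k$ has \emph{both} endpoints in the same class $V_k$, so when I prescribe $\Gamma_0$-data at $V_k$ I am simultaneously constraining both ends of the loop; I would handle this by solving $-f''=\lambda f$ on the loop $[0,l_t]$ subject to the pair of conditions coming from a single vertex (continuity and the shared value for $\delta$, or equality of normal derivatives for $\delta'$), and computing the resulting total normal-derivative (resp.\ total value) contribution. This is where the half-angle expressions $\tan(\mu l_t/2)$ and $\cot(\mu l_t/2)$ enter, via the identities $\cot\theta - (\sin\theta)^{-1} = -\tan(\theta/2)$ and $\cot\theta + (\sin\theta)^{-1} = \cot(\theta/2)$; carefully keeping the factor of $2$ and the sign in front of the loop sums is the delicate point. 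Once the loop case is settled, the remaining steps are the routine assembly described above, and consistency with Theorem \ref{Th_Spectra} (in particular the Herglotz property (2) and symmetry (3) of $M$) provides a useful sanity check on the signs and on the overall $\pm\mu^{\pm 1}$ prefactors distinguishing the $\delta$ and $\delta'$ formulae.
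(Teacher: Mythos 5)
Your proposal is correct and follows essentially the same route as the paper: the paper's proof likewise takes $f_\lambda\in\ker(A_{\max}-\lambda)$ with $\Gamma_0 f_\lambda=e_k$, reads off the $k$-th column of $M$ from $\Gamma_1 f_\lambda$ using the edge-by-edge data of Lemma \ref{Lem_Delta_Vertice}, and treats loops as an extra diagonal contribution. Your explicit handling of the loop case (solving on $[0,l_t]$ with both endpoints constrained, yielding the factor $2$ and the half-angle terms via $\cot\theta\mp\csc\theta$) is exactly the computation the paper leaves implicit, and your signs check out.
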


\begin{proof}
The proof is an explicit computation. Consider a function
$f_{\lambda}\in\ker(A_{\max}-\lambda)$. Let
$\Gamma_0f_{\lambda}=e_k,$ where $e_k=(0,0,\dots,{1},0,\dots,0)^T$
with $1$ in the $k$th position.

From Lemma \ref{Lem_Delta_Vertice} one gets the explicit
description of $f_\lambda$; it is then possible to compute
$\Gamma_1f_{\lambda}$ which yields the $k$th column of the
M-matrix sought provided that the graph has no loops.

If $\Gamma_\delta$ contains loops, it is easy to see that these
lead to contributions in diagonal entries as claimed.
\end{proof}

\begin{eg}\label{Example_mexican_dupa} Consider the following graph $\Gamma_{\delta}$:
\begin{center}
\includegraphics[width=.97\textwidth]{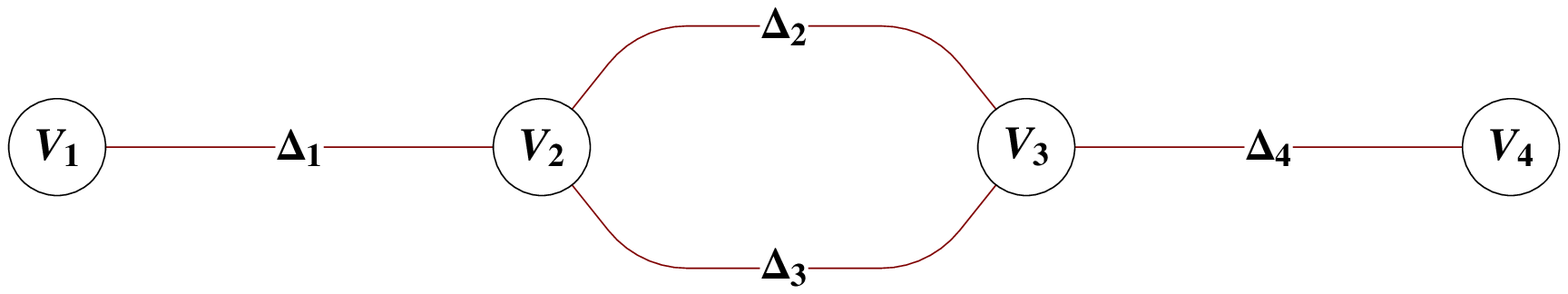}
\end{center}
Let the vertices $V_1$, $V_2$ be of $\delta$ type and the vertices
$V_3$, $V_4$ -- of  $\delta'$ type. Then within the setup of
Theorem~\ref{Th_Weyl_Func} the Weyl-Titchmarsh function admits the
form $M(\lambda)=$
\begin{SMALL}
\begin{equation*}
 \left(
      \begin{array}{cccc}
 -\mu\cot\mu l_1&\mu\csc\mu l_1  &    0    &  0 \\
 \mu\csc\mu l_1  &-\mu \left(\cot\mu l_1-\sum_{t=2}^3\tan\mu l_t\right)&-\sum_{j=2}^3\sec\mu l_t &  0 \\
 0 &-\sum_{j=2}^3\sec\mu l_t &-\frac{1}{\mu}\left(\cot\mu l_4-\sum_{t=2}^3\tan\mu l_t\right) &-\frac{1}{\mu}\csc\mu l_4\\
 0 & 0 & -\frac{1}{\mu}\csc\mu l_4             & -\frac{1}{\mu}\cot\mu l_4
      \end{array}
    \right)
\end{equation*}\end{SMALL}
\end{eg}

\section{Trace formulae for isospectral graph Laplacians}
In the present section, we apply the mathematical apparatus
developed in Section 3 in order to study isospectral (i.e., having
the same spectrum, counting multiplicities) quantum Laplacians
defined on a finite compact metric graph $\Gamma_\delta$.

Considering a pair of such Laplacians which differ  only in
coupling constants defining matching conditions we will derive an
infinite series of trace formulae (cf. \cite{Yorzh1} for an
analogous, although written in a different form, result in the
case of all vertices being of $\delta$ type).

\begin{thm}\label{Th_Trace_Form_Com}
Let $\Gamma_{\delta}$ be a finite compact metric graph with $N$
vertices in which $V_1,V_2,...,V_{N_1}$ are of $\delta$ type,
whereas $V_{N_1+1},V_{N_1+2},...,V_{N}$ are of $\delta'$ type. Let
$A_{\vec\alpha},\ A_{\vec{\widetilde{\alpha}}}$ be two graph
Laplacians on $\Gamma_{\delta}$ parameterized by coupling
constants $\{\alpha_{k}\}$ and $\{\widetilde{\alpha}_{k}\}$,
$k=\overline{1,N}$, respectively. If (point) spectra of the
operators  $A_{\vec\alpha}$ and $A_{\vec{\widetilde{\alpha}}}$
coincide counting multiplicities, the following infinite set of
trace formulae holds.
\begin{equation}\label{Eq_Trace_Form_Com}
\sum_{i=1}^{N_1}\frac{(-\alpha_i)^m}{\gamma_i^m}+\sum_{i=N_1+1,\
\alpha_i\not=0}^ {N}\frac{\gamma_{i}^m}{\alpha_{i}^m}=
\sum_{i=1}^{N_1}\frac{{(-\widetilde\alpha}_i)^m}{\gamma_i^m}+\sum_{i=N_1+1,\
\widetilde{\alpha}_i\not=0}^ {N}
\frac{\gamma_{i}^m}{\widetilde{\alpha}_{i}^m},
\end{equation}
$m=1,2,\dots$, where as above $\gamma_i=\deg V_i$. Moreover, the
sets $\{\alpha_{N_1+1},\dots,\alpha_N\}$ and
$\{\tilde\alpha_{N_1+1},\dots,\tilde\alpha_N\}$ have equal numbers
of zero elements.
\end{thm}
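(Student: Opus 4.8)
The plan is to exploit the characterization of the spectrum provided by Theorem~\ref{Th_Spectra}(4): since both operators are almost solvable with $B=\mathrm{diag}\{\alpha_k\}$ and $\widetilde B=\mathrm{diag}\{\widetilde\alpha_k\}$ respectively, sharing the same graph and hence the same M-function $M(\lambda)$ computed in Theorem~\ref{Th_Weyl_Func}, the spectrum of each Laplacian is exactly the set of $\lambda$ where $\det(B-M(\lambda))$ (respectively $\det(\widetilde B-M(\lambda))$) fails to admit analytic continuation, i.e. the zeros of these determinants counting multiplicity. Isospectrality therefore forces the meromorphic functions
\begin{equation*}
F(\lambda):=\det(B-M(\lambda)),\qquad \widetilde F(\lambda):=\det(\widetilde B-M(\lambda))
\end{equation*}
to have identical zero sets with identical multiplicities. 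First I would argue that the ratio $F/\widetilde F$ is an entire, nonvanishing function of $\mu=\sqrt\lambda$ of order at most one with no zeros, and that both $F$ and $\widetilde F$ share the same pole structure (the poles come only from $M(\lambda)$, which is common to both). This is essentially the content of the formula \eqref{Eq_F/tildeFb} mentioned in the footnote, and I would take $F/\widetilde F \equiv \mathrm{const}$ as the key structural identity; the constant is pinned down to $1$ by examining the leading asymptotics.

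The heart of the matter is then an asymptotic expansion of $\log F(\lambda)$ (and $\log\widetilde F(\lambda)$) as $\lambda\to -\infty$ along the negative real axis, where $\mu=i\tau$ with $\tau\to+\infty$ and all the trigonometric entries of $M$ become exponentially close to their hyperbolic limits: $\cot\mu l\to -i$, $\tan\mu l\to i$, $\csc\mu l,\sec\mu l\to 0$. In this regime $M(\lambda)$ becomes asymptotically diagonal. For a $\delta$-vertex $V_k$ the diagonal entry $m_{kk}\sim -\mu\cdot(\text{sum of }\mp i)$ behaves like $-i\mu\,\gamma_k^{\,0}\cdot(\pm 1)$ scaled appropriately — concretely $m_{kk}(\lambda)\to +i\mu$ times a combinatorial factor, while for a $\delta'$-vertex $m_{kk}(\lambda)\to \mp i/\mu$. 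The plan is to write
\begin{equation*}
F(\lambda)=\prod_{k=1}^{N}\bigl(\alpha_k-m_{kk}(\lambda)\bigr)\cdot\bigl(1+o(1)\bigr)
\end{equation*}
since the off-diagonal entries decay exponentially, so that $\log F=\sum_k \log(\alpha_k-m_{kk})+o(1)$.

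The decisive computation is to expand each factor. For a $\delta$-vertex, $\alpha_k-m_{kk}(\lambda)=\alpha_k+i\mu\,(1+o(1))$, so that $\log(\alpha_k-m_{kk})=\log(i\mu)+\log\bigl(1+\alpha_k/(i\mu)+\dots\bigr)$, and the Taylor expansion of the logarithm produces precisely the series $\sum_{m\ge1}\frac{(-1)^{m+1}}{m}\bigl(\alpha_k/(i\mu)\bigr)^m$; matching powers of $1/\mu$ against the corresponding expansion for $\widetilde F$ and using that the ratio is constant yields, coefficient by coefficient, the equality $\sum_{\delta\text{-vertices}}\alpha_k^m/\gamma_k^m = \sum_{\delta\text{-vertices}}\widetilde\alpha_k^m/\gamma_k^m$ after tracking the $\gamma_k$ normalization carried through $\Gamma_0$. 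For a $\delta'$-vertex the relevant factor is $\alpha_k-m_{kk}\sim \alpha_k + i/\mu$, whose logarithm is singular unless $\alpha_k\neq0$; factoring out $\alpha_k$ gives $\log\alpha_k+\log(1+i/(\alpha_k\mu))$, producing the reciprocal series $\sum_m \gamma_k^m/\alpha_k^m$ with the expected sign. This explains both the two distinct forms of the summands in \eqref{Eq_Trace_Form_Com} and the restriction $\alpha_i\neq0$ in the $\delta'$ sum. The main obstacle I anticipate is bookkeeping: rigorously justifying that the off-diagonal (exponentially small) terms and the $o(1)$ remainders do not contaminate any fixed negative power of $\mu$, so that term-by-term comparison of the full asymptotic series is legitimate; this requires controlling $\det(B-M)$ as a sum over permutations and showing every non-identity permutation contributes terms exponentially smaller than the diagonal product. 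The final clause about equal numbers of zero coupling constants among the $\delta'$-vertices then follows from the structural identity for $F/\widetilde F$: a vanishing $\alpha_k$ at a $\delta'$-vertex shifts the pole/zero order of $F$ at the relevant threshold by a fixed integer compared with a nonzero constant, so the counts must agree on both sides for the ratio to remain entire and zero-free.
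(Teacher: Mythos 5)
Your proposal follows essentially the same route as the paper's proof: form the ratio $\det(B-M(\lambda))/\det(\widetilde B-M(\lambda))$, argue that under isospectrality it is an entire, zero-free function and hence a constant by Hadamard's theorem, then expand the logarithms of both determinants as $\lambda\to-\infty$, where $M(\lambda)$ is asymptotically diagonal, and compare coefficients of the powers of $\tau^{-1}$ with $\tau=-i\sqrt{\lambda}\to+\infty$. Your explicit attention to the permutation expansion of the determinant (the off-diagonal entries $\csc\mu l$, $\sec\mu l$ decay exponentially) is a legitimate refinement of a step the paper leaves implicit.

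Two points, however, need repair. First, the Hadamard step as you state it is insufficient: for an entire function of $\mu$ ``of order at most one'' with no zeros, Hadamard gives $\exp(a+b\mu)$, not a constant. You must either work in the variable $\lambda$ and use the order-$\le 1/2$ bound (which the paper imports from Levin), or invoke the evenness of the ratio in $\mu$ to force $b=0$. Second, the constant is \emph{not} pinned down to $1$ in general. For a $\delta'$ vertex the relevant factor behaves like $\alpha_k-\gamma_k/\tau$, whose limit is $\alpha_k$ when $\alpha_k\neq0$ and whose leading term is $-\gamma_k/\tau$ when $\alpha_k=0$; consequently the limit of the ratio is a quotient of products of the nonzero $\alpha_i$, $\widetilde\alpha_i$ and of the valences at the vanishing couplings, as the paper computes explicitly after \eqref{Eq_F/tildeFb}. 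Insisting on the value $1$ would wrongly add the identity $\prod\alpha_i=\prod\widetilde\alpha_i$ (over $\delta'$ vertices) to the conclusions. Fortunately the trace formulae \eqref{Eq_Trace_Form_Com} arise only from the coefficients of $\tau^{-m}$, $m\geq1$, so your derivation survives once the constant is simply divided out rather than evaluated; and the equality of the numbers of zero coupling constants follows, more transparently than via your ``pole/zero order at the threshold'' heuristic, from matching the net power of $\tau$ in the leading asymptotics of the two determinants.
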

\begin{proof}
Using results of \cite[Chapter I]{Levin}, see \cite{Yorzh1} for
details, it is easy to ascertain that the fraction
$\det(B-M(\lambda))/\det(\widetilde B-M(\lambda))$ is an entire
function of exponential type of order not greater than $1/2$ under
the assumption that the spectra of $A_{\vec{\alpha}}$ and
$A_{\vec{\widetilde{\alpha}}}$ coincide (recall, that
$B=\mathrm{diag}\{\alpha_1,\dots,\alpha_N\}$ and $\tilde
B=\mathrm{diag}\{\tilde\alpha_1,\dots,\tilde\alpha_N\}$).
Moreover, it has no finite zeroes. Therefore by Hadamard theorem
\cite{Levin} one has
\begin{equation}\label{Eq_F/tildeFb}
\frac{\det(B-M(\lambda))}{\det(\widetilde B-M(\lambda))}=\exp(a)
\end{equation}
for some constant $a$.

Consider asymptotic expansions of the functions
$\det(B-M(\lambda))$ and  $\det(\widetilde{B}-M(\lambda))$ as
$\lambda\rightarrow -\infty$ along the real line. Using the
asymptotic expansion for $M(\lambda)$ provided by Theorem
\ref{Th_Weyl_Func} one has
\begin{gather*}
\det(B-M(\lambda))=\prod_{i=1}^{N_1}(\alpha_i+\gamma_i\tau)\prod_{i=N_1+1}^{N}(\alpha_i-\frac{\gamma_i}{\tau})+{o}(\tau^{-M})\\
\det(\widetilde{B}-M(\lambda))=\prod_{i=1}^{N_1}(\widetilde{\alpha}_i+\gamma_i\tau)\prod_{i=N_1+1}^{N}(\widetilde{\alpha}_i-
\frac{\gamma_i}{\tau})+{o}(\tau^{-M})
\end{gather*}
for any natural $M>0$, where $\tau=-i \sqrt{\lambda}\to +\infty.$
Therefore, passing to the limit in~\eqref{Eq_F/tildeFb} as
$\tau\rightarrow +\infty,$ one either gets
$$
\exp(a)=\frac{\prod_{i=N_1+1,\ \alpha_i\not=0}^{N}\alpha_i}
{\prod_{i=N_1+1,\
\widetilde{\alpha}_i\not=0}^{N}\widetilde{\alpha}_i}
\frac{\prod_{i=N_1+1,\ \alpha_i=0}^{N}\gamma_i}{\prod_{i=N_1+1,\
\tilde\alpha_i=0}^{N}\gamma_i}
$$
in the case when the sets $\{\alpha_{N_1+1},\dots,\alpha_N\}$ and
$\{\tilde\alpha_{N_1+1},\dots,\tilde\alpha_N\}$ have equal numbers
of zero elements, or faces a contradiction. Having divided both
sides of~\eqref{Eq_F/tildeFb} by $\exp(a)$ and then taking the
logarithm of the result, one arrives at
\begin{multline*}
    \sum_{i=1}^{N_1}\ln\Bigl(1+\frac{\alpha_i}{\gamma_i}\frac{1}{\tau}\Bigr)+\sum_{i=N_1+1,\ \alpha_i\not=0}^{N}\ln\left(1-\frac{\gamma_i}{\alpha_i}\frac{1}{\tau}\right)-\\
    -\sum_{i=1}^{N_1}\ln\Bigl(1+\frac{\widetilde{\alpha}_i}{\gamma_i}\frac{1}{\tau}\Bigr)-\sum_{i=N_1+1,\ \widetilde{\alpha}_i\not=0}^{N}\ln\Bigl(1-\frac{\gamma_i}{\widetilde{\alpha}_i}\frac{1}{\tau}\Bigr)+
    {o}(\tau^{-M})=0.
\end{multline*}

The Taylor expansion of logarithms yields that for any natural $M$
\begin{multline}\label{Eq_Tr_2}
     -\sum_{j=1}^{M}\frac{(-1)^j}{j\tau^j}\sum_{i=1}^{N_1}\Bigl(\frac{\alpha_i}{\gamma_i}\Bigr)^j
     -\sum_{j=1}^{M}\frac{1}{j\tau^j}\sum_{i=N_1+1,\ \alpha_i\not=0}^{N}\Bigl(\frac{\gamma_i}{\alpha_i}\Bigr)^j+\\
    +\sum_{j=1}^{M}\frac{(-1)^j}{j\tau^j}\sum_{i=1}^{N_1}\Bigl(\frac{\widetilde{\alpha}_i}{\gamma_i}\Bigr)^j
    +\sum_{j=1}^{M}\frac{1}{j\tau^j}\sum_{i=N_1+1,\ \widetilde{\alpha}_i\not=0}^{N}\Bigl(\frac{\gamma_i}{\widetilde{\alpha}_i}\Bigr)^j
    +{o}(\tau^{-M})=0.
\end{multline}
Comparing coefficients at equal powers of $\tau$ now completes the
proof.
\end{proof}

We will revisit the trace formulae of Theorem
\ref{Th_Trace_Form_Com} in the next Section, see Corollary
\ref{Referee_demand}. The named Corollary reformulates necessary
conditions of isospectrality for two graph Laplacians in a much
more transparent and easier to check form. The remainder of the
present Section is devoted to a number of results which extend the
applicability of the approach developed above in two different
directions.

First, we point out that the results obtained in this Section so
far allow generalization to the case of Shr\"odinger operators on
finite compact metric graphs. These operators in the case when all
edge potentials are assumed to be summable are correctly defined
by the differential expression $-d^2/dx^2+q(x)$, where $q\in
L^1(\Gamma)$, on the same domain as the corresponding graph
Laplacians (see conditions ($\delta$) and ($\delta'$) of Section
2).

If no additional smoothness is required of edge potentials
$q_j:=q|\Delta_j$, it is only possible to obtain the first trace
formula (i.e., for $m=1$). If however edge potentials are assumed
to be $C^\infty$, the full countable set of trace formulae is
available. In the present paper we will confine ourselves to the
former case.

\begin{thm}\label{Schrodinger}
Let $\Gamma_{\delta}$ be a marked finite compact metric graph
having $N$ vertices in which $V_1,V_2,...,V_{N_1}$ are  vertices
of $\delta$ type, whereas $V_{N_1+1},V_{N_1+2},...,V_{N}$ are of
$\delta'$ type. Let $\tilde A_{\vec{\tilde\alpha}}$ and
$A_{\vec{\alpha}}$ be two Schr\"odinger operators on the graph
$\Gamma_{\delta}$ parameterized by coupling constants
$\{\alpha_{k}\}$ and $\{\widetilde{\alpha}_{k}\}$,
$k=\overline{1,N}$, respectively. Let all edge
potentials\footnote{Note, that we do not need to assume here that
the potentials are the same for operators $\tilde
A_{\vec{\widetilde{\alpha}}}$ and $A_{\vec{\alpha}}$.} $\tilde
q_i,q_i\in L_1(\Delta_i)$ for all $i=1,\dots,n.$ Let the (point)
spectra of these two operators (counting multiplicities) be equal,
$\sigma(\tilde A_{\vec{\widetilde{\alpha}}}) =
\sigma(A_{\vec{\alpha}})$. Then the numbers of zero coupling
constants at vertices of $\delta'$ type are equal and
$$
-\sum_{i=1}^{N_1}\frac{\alpha_i}{\gamma_i}+\sum_{i=N_1+1:\
\alpha_i\neq 0}^N
\frac{\gamma_i}{\alpha_i}=-\sum_{i=1}^{N_1}\frac{\tilde\alpha_i}{\gamma_i}+\sum_{i=N_1+1:\
\tilde\alpha_i\neq 0}^N \frac{\gamma_i}{\tilde\alpha_i},
$$
where $\gamma_i=\deg V_i$.

\end{thm}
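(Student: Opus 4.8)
The plan is to reproduce the architecture of the proof of Theorem~\ref{Th_Trace_Form_Com}, isolating the one place where the potential enters. First I would observe that the boundary triple of Theorem~\ref{Th_Bound_Triple} is built solely from the sesquilinear boundary form $(A_{\max}f,g)-(f,A_{\max}g)$, and that adding a real, summable potential leaves this form untouched; hence $(\mathbb{C}^N,\Gamma_0,\Gamma_1)$ is again a boundary triple for the Schr\"odinger $A_{\max}$, and both $A_{\vec\alpha}$ and $\tilde A_{\vec{\tilde\alpha}}$ stay almost solvable with the same diagonal parameters $B=\mathrm{diag}\{\alpha_k\}$, $\tilde B=\mathrm{diag}\{\tilde\alpha_k\}$. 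Writing $M(\lambda)$, $\tilde M(\lambda)$ for the two $M$-functions (carrying the potentials $q$, $\tilde q$ respectively), Theorem~\ref{Th_Spectra}(4) identifies $\sigma(A_{\vec\alpha})$ and $\sigma(\tilde A_{\vec{\tilde\alpha}})$ with the zero sets of $\det(B-M(\lambda))$ and $\det(\tilde B-\tilde M(\lambda))$. Exactly as in Theorem~\ref{Th_Trace_Form_Com}, appealing to \cite{Levin} and to the simplicity footnote, the isospectrality hypothesis makes the quotient $\det(B-M)/\det(\tilde B-\tilde M)$ an entire function, free of finite zeros and of order at most $1/2$, so that \eqref{Eq_F/tildeFb} holds with some constant $a$. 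The order bound is unaffected by the passage to $L_1$ potentials, since the fundamental solutions $c(x,\lambda),s(x,\lambda)$ of $-u''+qu=\lambda u$ obey the uniform Volterra bounds $|c(x,\lambda)|,\,(1+|\mu|)|s(x,\lambda)|\le Ce^{|\im\mu|x}$, $\mu=\sqrt\lambda$.

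The substance of the argument is the asymptotics of $M(\lambda)$ as $\lambda\to-\infty$ along the real axis, with $\tau:=-i\sqrt\lambda\to+\infty$. As in the Laplacian case the off-diagonal entries are reciprocals of transfer coefficients and decay exponentially, so $\det(B-M)$ coincides with the product of its diagonal entries up to an exponentially small factor. Each diagonal entry decomposes edge by edge, just as in Lemma~\ref{Lem_Delta_Vertice}: an edge of length $l$ incident to a $\delta$-type vertex contributes a Dirichlet-to-Neumann block with leading value $-c(l,\lambda)/s(l,\lambda)$, whereas an edge incident to a $\delta'$-type vertex contributes the reciprocal, Neumann-to-Dirichlet, block. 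The claim I must verify for $q\in L_1$ is that these building blocks carry no potential-dependent constant term, so that summation over the $\gamma_k$ edges meeting $V_k$ reproduces the free-case leading behaviour
\[
m_{kk}(\lambda)=-\gamma_k\tau+o(1)\quad(\delta\text{ type}),\qquad m_{kk}(\lambda)=\frac{\gamma_k}{\tau}+o(\tau^{-1})\quad(\delta'\text{ type}),
\]
exactly the expansions feeding the product formula in the proof of Theorem~\ref{Th_Trace_Form_Com}.

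I expect this cancellation to be the main obstacle, because for merely summable $q$ one may neither differentiate the potential nor invoke any smoothness. I would establish it directly from the Volterra representations: a single iteration yields $c(l,\lambda)=\tfrac12 e^{\tau l}\bigl(1+\tfrac1{2\tau}\int_0^l q+o(\tau^{-1})\bigr)$ and $s(l,\lambda)=\tfrac1{2\tau}e^{\tau l}\bigl(1+\tfrac1{2\tau}\int_0^l q+o(\tau^{-1})\bigr)$, the oscillatory remainders being discarded by the Riemann--Lebesgue lemma. The \emph{same} first-order potential factor $1+\tfrac1{2\tau}\int_0^l q$ appears in numerator and denominator and cancels in the ratio, leaving $-\tau+o(1)$; this is Atkinson's relation $M(\lambda)=i\sqrt\lambda+o(1)$ for $L_1$ potentials transcribed to a finite edge, and at a $\delta'$ vertex the reciprocal block inherits a purely geometric $\tau^{-1}$ coefficient for the same reason. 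It is also precisely here that the method halts at $m=1$: the $O(\tau^{-1})$ terms retain genuine and generally distinct information about $q$ and $\tilde q$, so the cancellation fails already at order $\tau^{-2}$.

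With these asymptotics in hand the conclusion is the computation at the end of Theorem~\ref{Th_Trace_Form_Com}. Substituting the diagonal expansions into $\det(B-M)=\prod_k(B-M)_{kk}+(\text{exponentially small})$ and matching the leading powers of $\tau$ on the two sides of \eqref{Eq_F/tildeFb} forces the numbers of vanishing coupling constants at $\delta'$-type vertices to agree; taking logarithms, expanding as in \eqref{Eq_Tr_2}, and equating the coefficients of $\tau^{-1}$ then yields the stated identity, which is the $m=1$ instance of \eqref{Eq_Trace_Form_Com}.
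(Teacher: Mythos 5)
Your overall architecture is the one the paper itself gestures at (its ``proof'' is a one-line pointer to the asymptotics of regular Sturm--Liouville solutions in \cite{Sargsyan} and to \cite{Yorzh2}), and your central computation is correct and is indeed the heart of the matter: for $\lambda\to-\infty$, $\tau=-i\sqrt\lambda\to+\infty$, the first-order potential correction $1+\frac{1}{2\tau}\int_0^l q$ is common to $c(l,\lambda)$, $s(l,\lambda)$, $c'(l,\lambda)$ and $s'(l,\lambda)$, so it cancels in every Dirichlet-to-Neumann or Neumann-to-Dirichlet ratio, the diagonal entries of $M$ retain the free leading behaviour $-\gamma_k\tau+o(1)$ (resp.\ $\gamma_k/\tau+o(\tau^{-1})$), the off-diagonal entries are exponentially small, and the count of vanishing $\delta'$ coupling constants is read off the leading power of $\tau$ exactly as in Theorem \ref{Th_Trace_Form_Com}.

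There is, however, a genuine gap at the Hadamard step, and it is load-bearing. The function $\det(B-M(\lambda))$ is not entire: it inherits the poles of $M(\lambda)$, which sit at the spectrum of the decoupled operator $A_\infty$, i.e.\ at the zeros of the edge-wise characteristic functions $s_j(l_j,\cdot)$, $c_j(l_j,\cdot)$, $c_j'(l_j,\cdot)$ (according to endpoint types) \emph{of the potential $q_j$}. In Theorem \ref{Th_Trace_Form_Com} both determinants share one and the same $M$, so these poles cancel in the quotient; here the two operators carry different potentials, $A_\infty$ and $\tilde A_\infty$ are different operators, and the quotient $\det(B-M)/\det(\tilde B-\tilde M)$ is genuinely meromorphic, so \eqref{Eq_F/tildeFb} cannot be invoked as you do. The repair is to apply Hadamard to the entire characteristic functions $\Phi=\det(B-M)\prod_j\chi_j$ and $\tilde\Phi=\det(\tilde B-\tilde M)\prod_j\tilde\chi_j$, with $\chi_j$ the decoupled characteristic function of edge $j$ (this is how \cite{Yorzh1,Yorzh2} proceed). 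But each $\chi_j$ carries precisely the factor $1+\frac{1}{2\tau}\int_{\Delta_j}q_j+o(\tau^{-1})$ you computed, so $\prod_j\chi_j/\tilde\chi_j=1+\frac{1}{2\tau}\int_\Gamma(q-\tilde q)+o(\tau^{-1})$, which lands at exactly the order from which the $m=1$ identity is extracted. Your argument therefore yields the stated equality only with the additional terms $\frac12\int_\Gamma q$ and $\frac12\int_\Gamma\tilde q$ on the respective sides, i.e.\ the stated form holds only under the extra hypothesis $\int_\Gamma q=\int_\Gamma\tilde q$. That the discrepancy is not an artifact is visible already on a single edge with Robin ends, where a spectrum determines the classical invariant $\alpha_1+\alpha_2+\frac12\int q$ rather than $\alpha_1+\alpha_2$; so the footnote permitting distinct potentials is precisely the point at which your (and, as written, the paper's) statement needs this correction.
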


The \emph{proof} of this Theorem can be obtained using WKB
asymtotics of regular Sturm-Liouville problem solutions
\cite{Sargsyan}, see \cite{Yorzh2} for more details.

\begin{cor} Under the hypotheses of Theorem \ref{Schrodinger},

(i) If $\alpha_i=\alpha$ and $\tilde\alpha_i=\tilde\alpha$ for all
graph vertices of $\delta$ type, $i=1,\dots,N_1$, whereas
$\alpha_i=-1/\alpha$ and $\tilde\alpha_i=-1/\tilde\alpha$ for all
 graph vertices of $\delta'$ type, $i=N_1+1,\dots,N$, then
$\tilde\alpha=\alpha$.

(ii) If $\alpha_i=\alpha$ and $\tilde\alpha_i=\tilde\alpha$ for
all graph vertices, $i=1,\dots,N$, then either $\tilde
\alpha=\alpha$ or $\tilde\alpha
\alpha=-(\sum_{i=N_1+1}^N\gamma_i)/(\sum_{i=1}^{N_1}\gamma_i^{-1})$,
where $\gamma_i=\deg V_i$.
\end{cor}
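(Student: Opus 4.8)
The plan is to obtain both parts as immediate algebraic corollaries of the master trace formula in Theorem~\ref{Schrodinger}, by substituting the prescribed coupling patterns and factoring. No further analytic machinery is needed; the content is entirely in the bookkeeping of the restricted sums and the clause that the two families of $\delta'$-type couplings contain equally many zeros.

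For part (i) I would substitute $\alpha_i=\alpha$ at the $\delta$-type vertices and $\alpha_i=-1/\alpha$ at the $\delta'$-type vertices (which presupposes $\alpha\neq0$, so that every $\delta'$ coupling is nonzero and the constraint $\alpha_i\neq0$ under the sum is automatically met). Using $\gamma_i/(-1/\alpha)=-\alpha\gamma_i$, the left-hand side of the trace formula collapses to
\[
-\alpha\sum_{i=1}^{N_1}\frac{1}{\gamma_i}-\alpha\sum_{i=N_1+1}^{N}\gamma_i=-\alpha\kappa,\qquad \kappa:=\sum_{i=1}^{N_1}\frac{1}{\gamma_i}+\sum_{i=N_1+1}^{N}\gamma_i,
\]
and the right-hand side to $-\tilde\alpha\kappa$ by the same reduction. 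Since the degrees $\gamma_i$ are positive integers, $\kappa>0$, so $-\alpha\kappa=-\tilde\alpha\kappa$ forces $\tilde\alpha=\alpha$.

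For part (ii) I would set $\alpha_i=\alpha$ and $\tilde\alpha_i=\tilde\alpha$ at \emph{every} vertex and abbreviate $A:=\sum_{i=1}^{N_1}\gamma_i^{-1}$, $D:=\sum_{i=N_1+1}^{N}\gamma_i$. Assuming first $\alpha,\tilde\alpha\neq0$ (so all $\delta'$ terms are retained), the formula becomes $-\alpha A+D/\alpha=-\tilde\alpha A+D/\tilde\alpha$; transposing and clearing denominators gives the single factored identity
\[
(\tilde\alpha-\alpha)\Bigl(A+\frac{D}{\alpha\tilde\alpha}\Bigr)=0.
\]
Hence either $\tilde\alpha=\alpha$, or $\alpha\tilde\alpha=-D/A=-\bigl(\sum_{i=N_1+1}^{N}\gamma_i\bigr)/\bigl(\sum_{i=1}^{N_1}\gamma_i^{-1}\bigr)$, which is the claimed dichotomy (here $A>0$ as soon as a $\delta$-type vertex is present, so the quotient is well defined).

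The one point that needs genuine care — and which I expect to be the only real obstacle — is the degenerate case of vanishing couplings, where the restricted summation and the equal-number-of-zeros clause must be reconciled. Concretely, in part (ii) the choice $\alpha=0$ kills the $\delta$-sum and excludes every $\delta'$ term; the equal-number-of-zeros clause then forces $\tilde\alpha=0$ when $\delta'$-type vertices are present, while in their absence the identity degenerates to $-\tilde\alpha A=0$ with $A>0$, again yielding $\tilde\alpha=0=\alpha$. Thus the vanishing case always falls under the first alternative $\tilde\alpha=\alpha$, and no separate subcase survives. I would record this explicitly so that the factorization above may be carried out under the standing assumption $\alpha\tilde\alpha\neq0$.
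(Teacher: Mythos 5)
Your proof is correct and is exactly the argument the paper intends: the Corollary is stated without proof as an immediate consequence of the trace formula in Theorem~\ref{Schrodinger}, and direct substitution of the constant coupling patterns followed by the factorization $(\tilde\alpha-\alpha)\bigl(A+D/(\alpha\tilde\alpha)\bigr)=0$ is the natural (and evidently intended) route. Your explicit treatment of the degenerate case $\alpha=0$ via the equal-number-of-zeros clause is a welcome addition that the paper leaves implicit.
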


We finish this Section with a complement to Theorem
\ref{Th_Trace_Form_Com} which prohibits the ``decoupled''
Laplacian $A_\infty$ to have spectrum coinciding with that of an
operator $A_{\vec\alpha}$ for any $\vec\alpha$ provided that all
graph vertices are of $\delta$ type together with an analogous
result for the case of $\delta'$ vertices. In order to prove this,
one can use the following result.

\begin{prop}[\cite{DM}]\label{Prop_BoundTriple}
    Let $A_{\mathrm{min}}$ be a closed densely defined symmetric operator with equal deficiency indices,
     $(\mathcal{H},\ \Gamma_0,\ \Gamma_1)$ be a boundary triple of the operator $A_{\mathrm{max}}$,
     and finally let $M(\lambda)$ be the corresponding Weyl-Titchmarsh function.
    Then for any bounded in $\mathcal H$ self-adjoint operator $K$
\begin{enumerate}
    \item $(\mathcal{H},\ \Gamma_0,\ \Gamma_1+K\Gamma_0)$ is also a boundary triple for the operator  $A_{\max}$.
    Moreover, the corresponding Weyl-Titchmarsh function admits the form $\widehat{M}(\lambda)= M(\lambda)+K;$
    \item If the operator $A_B$ is an almost solvable w.r.t. the boundary triple $(\mathcal{H},\ \Gamma_0,\ \Gamma_1)$
    extension of the operator $A_{\mathrm{min}}$, this operator is almost solvable w.r.t. the boundary triple
    $(\mathcal{H},\ \Gamma_0,\ \Gamma_1+K\Gamma_0)$ as well. Its parameterization
    is then
    $\widehat{B}=B+K$.
\end{enumerate}
\end{prop}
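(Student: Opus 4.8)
The plan is to prove the two assertions separately, observing that both reduce to direct algebraic manipulations once the single structural ingredient—the self-adjointness of $K$—is properly exploited. Throughout I set $\widehat{\Gamma}_1 := \Gamma_1 + K\Gamma_0$, so that the candidate new triple is $(\mathcal{H}, \Gamma_0, \widehat{\Gamma}_1)$; note that $\widehat{\Gamma}_1$ is a well-defined map $\dom(A_{\max}) \to \mathcal{H}$ because $K$ is bounded.

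First I would verify that $(\mathcal{H}, \Gamma_0, \widehat{\Gamma}_1)$ satisfies the two conditions of Definition \ref{Def_BoundTrip}. For the abstract Green identity I would compute, for $f,g \in \dom(A_{\max})$,
$$(\widehat{\Gamma}_1 f, \Gamma_0 g)_{\mathcal{H}} - (\Gamma_0 f, \widehat{\Gamma}_1 g)_{\mathcal{H}} = \bigl[(\Gamma_1 f, \Gamma_0 g)_{\mathcal{H}} - (\Gamma_0 f, \Gamma_1 g)_{\mathcal{H}}\bigr] + \bigl[(K\Gamma_0 f, \Gamma_0 g)_{\mathcal{H}} - (\Gamma_0 f, K\Gamma_0 g)_{\mathcal{H}}\bigr].$$
The first bracket reproduces $(A_{\max} f, g)_H - (f, A_{\max} g)_H$ by the identity already valid for the original triple, while the second bracket vanishes \emph{precisely because} $K = K^*$, since then $(K\Gamma_0 f, \Gamma_0 g)_{\mathcal{H}} = (\Gamma_0 f, K\Gamma_0 g)_{\mathcal{H}}$. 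This is the one place where self-adjointness of $K$ is indispensable, and I expect it to be the conceptual crux (modest as it is) of part (1). Surjectivity of $f \mapsto (\Gamma_0 f, \widehat{\Gamma}_1 f)$ then follows by a substitution: given $(Y_0, Y_1) \in \mathcal{H}\times\mathcal{H}$, I would apply surjectivity of the original map to the pair $(Y_0, Y_1 - K Y_0)$ to obtain $f$ with $\Gamma_0 f = Y_0$ and $\Gamma_1 f = Y_1 - K Y_0$, whence $\widehat{\Gamma}_1 f = \Gamma_1 f + K\Gamma_0 f = Y_1$.

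To identify the new Weyl function I would argue directly from Definition \ref{DefWeylFunc}. For $\lambda \in \mathbb{C}_\pm$ and $f_\lambda \in \ker(A_{\max} - \lambda)$ one has $\widehat{\Gamma}_1 f_\lambda = \Gamma_1 f_\lambda + K\Gamma_0 f_\lambda = (M(\lambda) + K)\Gamma_0 f_\lambda$. Since passing from $\Gamma_1$ to $\widehat{\Gamma}_1$ leaves $\ker\Gamma_0$ unchanged, the extension $A_{\infty} = A_{\max}|\ker\Gamma_0$ and hence the admissible set of $\lambda$ coincide for both triples; moreover $\Gamma_0$ maps $\ker(A_{\max} - \lambda)$ bijectively onto $\mathcal{H}$ for such $\lambda$ (a standard property of boundary triples, already implicit in the very existence of $M(\lambda)$ as an operator on $\mathcal{H}$). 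Letting $\Gamma_0 f_\lambda$ range over all of $\mathcal{H}$ therefore yields the operator identity $\widehat{M}(\lambda) = M(\lambda) + K$, completing part (1).

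Finally, part (2) is a one-line rewriting requiring no new input. For $f \in \dom(A_{\max})$ the defining condition of the extension $A_B$ reads $\Gamma_1 f = B\Gamma_0 f$; adding $K\Gamma_0 f$ to both sides gives $\widehat{\Gamma}_1 f = (B + K)\Gamma_0 f$, that is $\widehat{\Gamma}_1 f = \widehat{B}\Gamma_0 f$ with $\widehat{B} := B + K$, which is bounded since $K$ is. Because the set $\dom(A_B)$ is described equivalently by either condition, the very same operator $A_B$ is almost solvable with respect to the new triple, with parameter $\widehat{B}$. Across the whole argument the only points deserving care are thus the self-adjointness of $K$ in the Green identity and the standard bijectivity of $\Gamma_0|_{\ker(A_{\max} - \lambda)}$; neither constitutes a genuine obstacle, so the proof is essentially a verification.
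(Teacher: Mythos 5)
Your verification is correct: the Green identity survives because the extra terms $(K\Gamma_0 f,\Gamma_0 g)-(\Gamma_0 f,K\Gamma_0 g)$ cancel by $K=K^*$, surjectivity follows by shifting the target to $(Y_0,\,Y_1-KY_0)$, the Weyl function transforms as claimed since $\Gamma_0$ is bijective from $\ker(A_{\max}-\lambda)$ onto $\mathcal H$ for non-real $\lambda$, and part (2) is the observation that $\Gamma_1 f=B\Gamma_0 f$ is equivalent to $\widehat\Gamma_1 f=(B+K)\Gamma_0 f$. The paper gives no proof of this Proposition, citing it to Derkach--Malamud, and your argument is exactly the standard verification that reference would supply; there is nothing to add.
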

This allows to prove the following
\begin{thm}\label{Prop_Isosp_A_inf_Delta}
    Let $\Gamma_{\delta}$ be a finite compact metric graph with $N$ vertices.
    Let $A_{\vec\alpha}$ be a graph Laplacian defined on  $\Gamma$
    with matching conditions of $\delta$ type at all vertices. Then the spectra of the operators $A_{\vec\alpha}$
    and
    $A_{\infty}$ coincide for no non-zero parameterizing matrix $B=\mathrm{diag}\{\alpha_{1},\alpha_{2},\dots,\alpha_{N}\}$.
\end{thm}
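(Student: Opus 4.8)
The plan is to argue by contradiction, encoding the isospectrality assumption into the single scalar function $D(\lambda):=\det\bigl(B-M(\lambda)\bigr)$ and then refuting it by the large negative $\lambda$ asymptotics already computed in the proof of Theorem~\ref{Th_Trace_Form_Com}. First I would identify $A_\infty$ explicitly. Since every vertex is of $\delta$ type, the condition $f\in\ker\Gamma_0$ reads $f^\Sigma(V_k)=0$ for all $k$ by \eqref{Eq_Gamma_0}, while membership in $\mathcal D(\Gamma_\delta)$ forces continuity of $f$ at every internal vertex; together these give $f(x_j)=0$ at all endpoints. Hence $A_\infty$ is the fully decoupled Dirichlet Laplacian $\bigoplus_{j=1}^{n}\bigl(-d^2/dx^2\bigr)$ on the edges, so that $\sigma(A_\infty)=\bigcup_{j=1}^{n}\{(\pi m/l_j)^2:m\ge 1\}\subset(0,\infty)$. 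By \eqref{Eq_Weyl_Func_Delta} every entry of $M$ is built from $\cot\mu l_t$, $\csc\mu l_t$ and $\tan(\mu l_t/2)$, and these points are precisely the poles of $M(\lambda)$.

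Next I would invoke Theorem~\ref{Th_Spectra}(4): the resolvent set of $A_{\vec\alpha}=A_B$ is exactly the set of $\lambda$ to which $(B-M(\lambda))^{-1}$ continues analytically, so that away from the poles of $M$ the eigenvalues of $A_{\vec\alpha}$ are the zeros of $D$. (It is convenient, following Proposition~\ref{Prop_BoundTriple} with the self-adjoint shift $K=-B$, to view $A_{\vec\alpha}$ as the $\ker(\Gamma_1-B\Gamma_0)$ extension of the shifted triple, whose Weyl function is $M-B$; this changes $D$ only by the sign $(-1)^N$.) Now assume $\sigma(A_{\vec\alpha})=\sigma(A_\infty)$ counting multiplicities. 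Then $A_{\vec\alpha}$ has no eigenvalue outside $\sigma(A_\infty)$, so $D$ has no zeros outside $\sigma(A_\infty)$, while at the points of $\sigma(A_\infty)$ the function $D$ inherits genuine poles from $M$. Thus $D$ is zero-free on all of $\mathbb C$, and $G(\lambda):=1/D(\lambda)$ is an \emph{entire} function whose zeros are precisely $\sigma(A_\infty)\subset(0,\infty)$. As in the proof of Theorem~\ref{Th_Trace_Form_Com}, each entry of $M$ grows like $O(|\lambda|^{1/2})$, so $G$ is of order at most $1/2$; by the Hadamard factorization theorem \cite{Levin} it has genus zero, $G(\lambda)=C\prod_n(1-\lambda/\lambda_n)$ with all $\lambda_n\in\sigma(A_\infty)$ positive.

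It remains to contradict this. On the one hand, since all $\lambda_n>0$, we have $G(\lambda)=C\prod_n\bigl(1+(-\lambda)/\lambda_n\bigr)\to\infty$ as $\lambda\to-\infty$ along the real axis. On the other hand, the asymptotic expansion established in the proof of Theorem~\ref{Th_Trace_Form_Com}, specialised to $N_1=N$, gives $M(\lambda)\to-\tau\,\mathrm{diag}\{\gamma_1,\dots,\gamma_N\}$ with exponentially small off-diagonal entries as $\tau:=\sqrt{-\lambda}\to+\infty$, whence $D(\lambda)\to\prod_{k=1}^{N}(\alpha_k+\gamma_k\tau)$, a product whose leading term $\tau^{N}\prod_k\gamma_k$ tends to $+\infty$. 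Hence $G=1/D\to 0$, contradicting $G\to\infty$. This contradiction establishes the theorem.

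The step I expect to be the genuine obstacle is the claim, in the second paragraph, that $D$ actually carries a genuine pole at \emph{every} point of $\sigma(A_\infty)$, so that $G=1/D$ is entire. This rests on the residue (principal-part) structure of $M$ at its poles and may fail for exceptional $B$, where a residue degenerates and a pole of $D$ disappears; the corresponding Dirichlet value then re-enters $\sigma(A_{\vec\alpha})$ through $\mathrm{adj}(B-M)$ while $D$ simultaneously acquires zeros elsewhere (for a single edge with $\alpha_1=-\alpha_2$, for instance, a spurious negative eigenvalue $\lambda=-\alpha_1^2$ appears), so that isospectrality fails for an even more elementary reason. Such degenerate configurations must be treated separately, and the most degenerate one, $B=0$, where the poles of $M$ cancel inside $\det M$ for purely structural reasons, is exactly the case excluded from the statement.
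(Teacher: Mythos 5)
Your overall strategy --- encoding isospectrality into the single function $D(\lambda)=\det(B-M(\lambda))$, showing $G=1/D$ is entire of order at most $1/2$ with all zeros on the positive half-line, and contradicting this with the asymptotics $D\sim\tau^N\prod_k\gamma_k\to\infty$ as $\lambda\to-\infty$ --- is viable in outline, but it hinges on exactly the step you flag and do not close: that $D$ is zero-free on \emph{all} of $\mathbb C$, including at the poles of $M$. Away from those poles this does follow from Theorem \ref{Th_Spectra}(4) and isospectrality, but at a point $\lambda_0\in\sigma(A_\infty)$ nothing in your argument prevents the principal parts from cancelling inside the determinant, leaving $D$ regular at $\lambda_0$ and possibly vanishing there; then $G$ acquires a pole and the Hadamard factorization is unavailable. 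Worse, under your own hypothesis this is not a fringe scenario: $\sigma(A_{\vec\alpha})=\sigma(A_\infty)$ forces every eigenvalue of $A_{\vec\alpha}$ to sit at a pole of $M$, and the standard multiplicity bookkeeping ($\mathrm{ord}_{\lambda_0}D$ equals the multiplicity of $\lambda_0$ in $\sigma(A_{\vec\alpha})$ minus that in $\sigma(A_\infty)$) would then make $D$ regular and non-vanishing at \emph{every} such point --- the ``genuine poles'' you rely on are precisely the ones that get cancelled. Either way the argument needs this local multiplicity identity for $\det(B-M)$, which you neither prove nor cite; the promise that degenerate residue configurations ``must be treated separately'' because isospectrality then ``fails for an even more elementary reason'' is an assertion, not an argument, and for a general graph the set of such configurations is not easy to control. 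A smaller but real issue: ``entries of $M$ are $O(|\lambda|^{1/2})$, hence $G$ has order at most $1/2$'' is not a proof --- an upper bound on $|D|$ gives a lower bound on $|G|$, not control of its order; one needs to represent $D$ as a ratio of entire functions of order $1/2$, as in the Levin-type argument of \cite{Yorzh1}.

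The paper sidesteps all of this by a different device: it passes to the flipped boundary triple $\widehat\Gamma_0=\Gamma_1$, $\widehat\Gamma_1=-\Gamma_0$ (after a preliminary shift $\Gamma_1\mapsto\Gamma_1+\alpha\Gamma_0$ via Proposition \ref{Prop_BoundTriple} when $B$ is singular), so that $A_\infty$ becomes the almost solvable extension with parameter $0$ and $A_{\vec\alpha}$ the one with parameter $-B^{-1}$, both relative to the same Weyl function $\widehat M=-M^{-1}\sim\tau^{-1}\,\mathrm{diag}\{\gamma_1^{-1},\dots,\gamma_N^{-1}\}$. The ratio of the two determinants is then $e^a$ by the machinery already established for Theorem \ref{Th_Trace_Form_Com} --- the poles cancel automatically because numerator and denominator share the same $\widehat M$ --- and the asymptotics force the two parameterizing matrices to have equal numbers of zero diagonal entries, $0$ versus $N$, which is the contradiction. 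If you wish to keep your single-determinant route you must supply the multiplicity identity for $\mathrm{ord}_{\lambda_0}\det(B-M)$; otherwise the cleanest repair is to adopt the triple flip.
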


\begin{proof}
First assume that the matrix $B$ is invertible. In this case, pass
over to the boundary triple $(\mathcal{H},\
\widehat{\Gamma}_0=\Gamma_1,\ \widehat{\Gamma}_1=-\Gamma_0)$. The
operator $A_B$ is clearly an almost solvable extension of
$A_{\min}$ w.r.t. this triple, i.e.,  $A_B=A_{\widehat{B}}$, where
$\widehat{B}=-B^{-1}$. The Weyl-Titchmarsh function admits the
form $\widehat{M}(\lambda)=-M(\lambda)^{-1}$, and the operator
$A_{\infty}$ in this ``new'' boundary triple corresponds to $A_0$.
Hence in terms of the amended boundary triple one has
isospectrality for the operators $A_{B^{-1}}$ and $A_0$.

In the case of all vertices having $\delta$ type one clearly has
the following asymptotics for $M(\lambda)$: $
M(\lambda)=-\tau\Gamma_N+o(\tau^{-M})\quad \forall M>0 $ and hence
$$
\widehat{M}(\lambda)=(1/\tau)\Gamma_N^{-1}+o(\tau^{-M})\quad
\forall M>0,
$$
where $\Gamma_N=\mathrm{diag }\{\gamma_1,\dots,\gamma_N\}$.

As shown in the proof of Theorem~\ref{Th_Trace_Form_Com}, this
asymptotic expansion together with the condition of isospectrality
yields that both matrices parameterizing extensions must have
equal ranks, which leads to a contradiction.

Now let the matrix
$B=\mathrm{diag}\{\alpha_1,\alpha_2,...,\alpha_N\}$ degenerate.
Consider the new boundary triple defined as follows:
$(\mathcal{H},\ \widehat{\Gamma}_0=\Gamma_0,\
\widehat{\Gamma}_1=\Gamma_1+\alpha \Gamma_0)$, where
$\alpha>\mathrm{max}{|\alpha_i|,\ i=\overline{1,N}}$.
Proposition~\ref{Prop_BoundTriple} gives $A_B=A_{\widehat{B}}$,
where
$\widehat{B}=\mathrm{diag}\{\alpha_1+\alpha,\alpha_2+\alpha,...,\alpha_N+\alpha\}$
is a non-degenerate matrix, whereas the operator $A_{\infty}$
remains the same. The Weyl-Titchmarsh function admits the form
$\widehat{M}(\lambda)=M(\lambda)+\alpha I$. We have thus reduced
the situation to the one already considered.
\end{proof}

An analogous result holds with a similar proof in the case when
all graph vertices are of $\delta'$ type.
\begin{thm}\label{Prop_Isosp_A_inf_Delta'}
    Let $\Gamma_{\delta}$ be a finite compact metric graph with $N$ vertices.
    Let $A_{\vec\alpha}$ be a graph Laplacian defined on  $\Gamma$
    with matching conditions of $\delta'$ type at all vertices. Then the spectra of the operators $A_{\vec\alpha}$
    and
    $A_{\infty}$ coincide for no non-zero parameterizing matrix $B=\mathrm{diag}\{\alpha_{1},\alpha_{2},\dots,\alpha_{N}\}$.
\end{thm}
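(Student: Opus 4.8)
The plan is to follow the strategy of the proof of Theorem~\ref{Prop_Isosp_A_inf_Delta}, exploiting the duality between the $\delta$ and $\delta'$ cases. As there, the first step is to pass to a boundary triple in which the decoupled operator $A_\infty$ becomes an \emph{almost solvable} extension, so that isospectrality can be analysed by means of the Hadamard factorisation argument of Theorem~\ref{Th_Trace_Form_Com}. Assuming first that $B=\mathrm{diag}\{\alpha_1,\dots,\alpha_N\}$ is invertible, I would pass over to the ``swapped'' triple $(\mathcal H,\ \widehat\Gamma_0=\Gamma_1,\ \widehat\Gamma_1=-\Gamma_0)$. Then $A_B=A_{\widehat B}$ with $\widehat B=-B^{-1}$, the operator $A_\infty$ turns into $A_0$ (parameterised by the zero matrix), and the Weyl function becomes $\widehat M(\lambda)=-M(\lambda)^{-1}$. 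Since all vertices are of $\delta'$ type, Theorem~\ref{Th_Weyl_Func} yields the asymptotics $M(\lambda)=\tfrac1\tau\Gamma_N+o(\tau^{-M})$ for every $M>0$, where $\tau=-i\sqrt\lambda\to+\infty$ and $\Gamma_N=\mathrm{diag}\{\gamma_1,\dots,\gamma_N\}$; consequently $\widehat M(\lambda)=-\tau\Gamma_N^{-1}+o(\tau^{-M})$.

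Next, exactly as in the proof of Theorem~\ref{Th_Trace_Form_Com}, under the assumption of isospectrality the ratio $\det(\widehat B-\widehat M)/\det(-\widehat M)$ is entire of exponential type of order at most $1/2$ and has no finite zeros, hence equals a nonzero constant $\exp(a)$. Substituting the above asymptotics gives $\det(\widehat B-\widehat M)=\prod_i(\tau/\gamma_i-1/\alpha_i)+o(\tau^{-M})$ and $\det(-\widehat M)=\prod_i(\tau/\gamma_i)+o(\tau^{-M})$, so that letting $\tau\to+\infty$ forces $\exp(a)=1$. Taking logarithms and expanding in powers of $1/\tau$ as in \eqref{Eq_Tr_2} then yields the relations $\sum_{i=1}^N(\gamma_i/\alpha_i)^m=0$ for every $m\ge1$. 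The relation for $m=2$ reads $\sum_{i=1}^N\gamma_i^2/\alpha_i^2=0$; since every summand is strictly positive (the $\alpha_i$ being real and finite and $\gamma_i=\deg V_i\ge1$), this is impossible. This contradiction disposes of the case of invertible $B$.

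It remains to treat a degenerate but nonzero $B$, say with $p$ vanishing coupling constants, $1\le p\le N-1$. As in Theorem~\ref{Prop_Isosp_A_inf_Delta} I would first use Proposition~\ref{Prop_BoundTriple} to pass to the triple $(\mathcal H,\ \Gamma_0,\ \Gamma_1+\alpha\Gamma_0)$ with $\alpha>\max_i|\alpha_i|$, which leaves $A_\infty$ unchanged, renders $\widehat B=B+\alpha I$ invertible, and gives $\widehat M=M+\alpha I$; I would then apply the swap above to this new triple. Here lies the main obstacle: unlike the $\delta$ case, the shift destroys the $1/\tau$ leading behaviour of $M$, since now $\widehat M\to\alpha I$, so one cannot simply invoke the invertible case and the asymptotic bookkeeping must be redone. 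Writing $\widetilde M=-(M+\alpha I)^{-1}=-\tfrac1\alpha I+\tfrac1{\alpha^2\tau}\Gamma_N+o(\tau^{-1})$ and $\widetilde B=-(B+\alpha I)^{-1}$, one computes $(\widetilde B-\widetilde M)_{ii}=\frac{\alpha_i}{\alpha(\alpha_i+\alpha)}-\frac{\gamma_i}{\alpha^2\tau}+o(\tau^{-1})$, whose leading term is a nonzero constant when $\alpha_i\ne0$ but is of order $\tau^{-1}$ precisely at the $p$ indices with $\alpha_i=0$, the off-diagonal entries remaining exponentially small. Hence $\det(\widetilde B-\widetilde M)$ is of order $\tau^{-p}$, while $\det(-\widetilde M)\to\alpha^{-N}\ne0$. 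The Hadamard argument again shows this ratio to be a nonzero constant, yet the computation gives that it behaves like $\mathrm{const}\cdot\tau^{-p}\to0$ as $\tau\to+\infty$; since $p\ge1$, this is a contradiction. Thus isospectrality of $A_{\vec\alpha}$ and $A_\infty$ fails for every nonzero parameterising matrix $B$, which is the assertion.
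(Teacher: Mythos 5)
Your proof is correct and follows the same strategy the paper intends when it asserts that Theorem~\ref{Prop_Isosp_A_inf_Delta'} admits ``a similar proof'' to Theorem~\ref{Prop_Isosp_A_inf_Delta}: transform the boundary triple so that $A_\infty$ becomes almost solvable, then compare asymptotics of the two determinants via the Hadamard argument. You also correctly identify and repair the one place where the analogy is not verbatim --- in the degenerate case the shift by $\alpha I$ dominates the $O(1/\tau)$ behaviour of $M(\lambda)$, so the paper's ``reduction to the case already considered'' must be replaced by your direct $\tau^{-p}$ estimate, and in the invertible case the contradiction comes from the $m=2$ power sum $\sum_i\gamma_i^2/\alpha_i^2=0$ rather than from the count of zero diagonal entries used in the $\delta$ case.
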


\section{Isospectral graphs by examples}

\subsection{Further dissemination of trace formulae}
We start with the following
\begin{lem}\label{dissem}
Every solution of the infinite system of equations
\begin{equation}\label{beta}
 \sum_{i=1}^N
\beta_i^m=\sum_{i=1}^N \tilde\beta_i^m\quad m=1,2,\dots.
\end{equation}
is one of the following:
\begin{equation}\label{beta_solution}
\beta_i=\tilde\beta_{k_i}\quad i=1,2,\dots,N,
\end{equation}
where $\{k_1,k_2,\dots,k_N\}$ is a permutation of the finite
sequence $\{1,2,\dots,N\}$. Conversely, each of
\eqref{beta_solution} solves the system \eqref{beta}.
\end{lem}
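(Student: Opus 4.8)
The plan is to recognize Lemma \ref{dissem} as the classical statement that a finite multiset is determined by its power sums, or equivalently by its elementary symmetric functions via the Newton--Girard identities. The most transparent route is through generating functions: I would encode each of the two sequences as the roots of a monic polynomial and show that equality of all power sums forces these polynomials to coincide, whence the roots agree up to permutation.

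Concretely, I would first reduce the infinite system \eqref{beta} to its first $N$ equations, $m=1,\dots,N$. Define the power sums $p_m=\sum_{i=1}^N\beta_i^m$ and $\tilde p_m=\sum_{i=1}^N\tilde\beta_i^m$, and let $e_1,\dots,e_N$ and $\tilde e_1,\dots,\tilde e_N$ be the elementary symmetric polynomials in the two collections. The Newton--Girard identities express each $e_k$ as a universal polynomial in $p_1,\dots,p_k$ (and conversely), so $p_m=\tilde p_m$ for $m=1,\dots,N$ already yields $e_k=\tilde e_k$ for all $k=1,\dots,N$. I would then form the two monic polynomials
\begin{equation*}
P(z)=\prod_{i=1}^N(z-\beta_i)=z^N-e_1 z^{N-1}+\dots+(-1)^N e_N,
\end{equation*}
and the analogous $\tilde P(z)$ built from $\tilde e_k$. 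Equality of coefficients gives $P\equiv\tilde P$, so the two polynomials have identical root sets counting multiplicity. This is exactly the assertion \eqref{beta_solution}: there is a permutation $\{k_1,\dots,k_N\}$ of $\{1,\dots,N\}$ with $\beta_i=\tilde\beta_{k_i}$.

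For the converse direction, I would simply observe that any reindexing of the summands leaves each symmetric sum $\sum_{i=1}^N\beta_i^m$ unchanged, so \eqref{beta_solution} trivially satisfies \eqref{beta} for every $m$; this needs no computation.

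The one genuine point requiring care is the justification that truncating the infinite system to $m=1,\dots,N$ loses no information, i.e., that the first $N$ power sums determine the multiset. This is where the Newton--Girard inversion is essential: the recursion $p_m-e_1 p_{m-1}+\dots+(-1)^{m-1}e_{m-1}p_1+(-1)^m m\,e_m=0$ lets one solve for $e_m$ in terms of $p_1,\dots,p_m$ over $\mathbb{Q}$, so no division by a quantity that could vanish is needed. I expect this inductive solvability to be the only step that is not immediate, though it is entirely standard; the remaining equations $m>N$ are then automatically consistent and carry no extra constraints.
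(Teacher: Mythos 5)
Your proof is correct, but it takes a genuinely different route from the paper's. The paper argues by an asymptotic peeling procedure: it singles out the element $\beta_{i_0}$ of largest modulus, divides both sides of \eqref{beta} by $\beta_{i_0}^m$, and lets $m\to\infty$ to identify $\beta_{i_0}$ with the largest-modulus element of the tilded sequence; it then cancels this pair and repeats, with a brief remark that coinciding moduli cause no trouble. Your argument instead invokes the Newton--Girard inversion to pass from the power sums $p_1,\dots,p_N$ to the elementary symmetric functions $e_1,\dots,e_N$, and then compares the two monic polynomials $\prod_i(z-\beta_i)$ and $\prod_i(z-\tilde\beta_i)$ coefficient by coefficient. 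Both are sound. What your approach buys is a sharper statement: only the first $N$ equations of \eqref{beta} are needed, whereas the paper's limiting argument genuinely uses arbitrarily large $m$; your route also absorbs repeated values and sign ambiguities (e.g.\ $\tilde\beta_{j_0}=-\beta_{i_0}$ with equal moduli, which the limit along even $m$ alone would not exclude) without any case analysis, whereas the paper relegates these to ``the generalization is trivial.'' What the paper's approach buys is that it is entirely elementary --- no symmetric-function machinery --- and matches the spirit of the asymptotic expansions used throughout Section 4. You correctly flag the one delicate point in your route, namely that solving the Newton recursion for $e_m$ requires only division by the integer $m$, which is harmless over $\mathbb{R}$.
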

\begin{proof}
First assume that all $|\beta_i|$ are different. Let $\beta_{i_0}$
be the largest (by modulus) element of the sequence
$\{\beta_1,\beta_2,\dots,\beta_N\}$. Divide both parts of
\eqref{beta} by $\beta_{i_0}^m$ and pass to the limit as
$m\to\infty$. Then $\beta_{i_0}=\tilde\beta_{j_0}$ for some $j_0$,
where $\tilde\beta_{j_0}$ is the largest (by modulus) element of
the sequence
$\{\tilde\beta_1,\tilde\beta_2,\dots,\tilde\beta_N\}$.
\eqref{beta} then takes the form
$$
 \sum_{i=1, i\not=i_0}^N
\beta_i^m=\sum_{j=1, j\not=j_0}^N \tilde\beta_j^m\quad
m=1,2,\dots.
$$

Repeating the argument $N-2$ times, one arrives at the claimed
result. The generalization to the case of repeating $|\beta_i|$ is
trivial.
\end{proof}

In particular together with Theorem \ref{Th_Trace_Form_Com} this
yields that not only the numbers of zero coupling constants at
$\delta'$ type vertices have to be equal in $\vec\alpha$ and
$\vec{\tilde\alpha}$, but also the \emph{total} numbers of zero
coupling constants have to be the same under the condition of
isospectrality.

Hence, Theorem \ref{Th_Trace_Form_Com}, although leaving
possibilities for isospectrality of graph Laplacians,
significantly narrows down the set of ``opportunities'' for this
isospectrality. Precisely, one has the following
\begin{cor}\label{Referee_demand}
Let $\Gamma_{\delta}$ be a finite compact metric graph with $N$
vertices. Let $A_{\vec\alpha},\ A_{\vec{\widetilde{\alpha}}}$ be
two graph Laplacians on $\Gamma_{\delta}$ parameterized by
coupling constants $\{\alpha_{k}\}$ and
$\{\widetilde{\alpha}_{k}\}$, $k=\overline{1,N}$, respectively.
Then the graph Laplacians  $A_{\vec\alpha}$ and
$A_{\vec{\widetilde{\alpha}}}$ are isospectral only if the sets
$S$ and $\tilde S$ are permutations of each other, where
$S=\{\sigma_i\}_{i=1}^N$ with $\sigma_j=-\alpha_j/\gamma_j$ for
$V_j$ of $\delta$ type; $\sigma_j=\gamma_j/\alpha_j$ for $V_j$ of
$\delta'$ type with non-zero coupling constant; $\sigma_j=0$ in
the remaining case. The set $\tilde S$ is defined in absolutely
the same way based on coupling constants
$\{\widetilde{\alpha}_{k}\}$.
\end{cor}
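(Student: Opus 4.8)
The plan is to combine Theorem~\ref{Th_Trace_Form_Com} with Lemma~\ref{dissem} in a direct manner. Define, as in the statement, the quantities $\sigma_j$ and $\tilde\sigma_j$ so that the left-hand side of each trace formula~\eqref{Eq_Trace_Form_Com} becomes simply $\sum_{i=1}^N\sigma_i^m$. Indeed, for a vertex $V_j$ of $\delta$ type the term contributed to the $m$th trace formula is $(-\alpha_j/\gamma_j)^m=\sigma_j^m$; for a vertex $V_j$ of $\delta'$ type with $\alpha_j\neq0$ the term is $(\gamma_j/\alpha_j)^m=\sigma_j^m$; and for a $\delta'$ vertex with $\alpha_j=0$ we have assigned $\sigma_j=0$, contributing nothing to any trace formula. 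Thus the entire family~\eqref{Eq_Trace_Form_Com}, $m=1,2,\dots$, is precisely the statement that $\sum_{i=1}^N\sigma_i^m=\sum_{i=1}^N\tilde\sigma_i^m$ for all $m\geq1$, provided the two index sets carry the same number of vanishing $\sigma_j$'s.

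First I would verify this bookkeeping. The trace formulae as written only sum over \emph{non-zero} coupling constants at $\delta'$ vertices, so the introduction of the artificial value $\sigma_j=0$ for the excluded vertices adds $0^m=0$ to each side and changes nothing; the passage from~\eqref{Eq_Trace_Form_Com} to~\eqref{beta} is therefore an identity once the zero-counts match. That matching is exactly what is needed to apply Lemma~\ref{dissem} to the full $N$-tuples $(\sigma_1,\dots,\sigma_N)$ and $(\tilde\sigma_1,\dots,\tilde\sigma_N)$ rather than to truncated tuples of differing lengths. Fortunately the requisite equality of zero-counts is already furnished for us: Theorem~\ref{Th_Trace_Form_Com} asserts that the numbers of zero coupling constants among the $\delta'$ vertices agree, and the remark immediately preceding the corollary (deduced from Theorem~\ref{Th_Trace_Form_Com} together with Lemma~\ref{dissem}) upgrades this to equality of the \emph{total} numbers of zero coupling constants. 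Since a $\delta$ vertex contributes $\sigma_j=0$ only when $\alpha_j=0$, and a $\delta'$ vertex contributes $\sigma_j=0$ exactly when $\alpha_j=0$, the number of vanishing $\sigma_j$'s equals the total number of vanishing coupling constants, so the two $\sigma$-tuples have equally many zero entries.

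With the zero-counts reconciled, I would invoke Lemma~\ref{dissem} directly: the system $\sum_{i=1}^N\sigma_i^m=\sum_{i=1}^N\tilde\sigma_i^m$, $m=1,2,\dots$, forces $\sigma_i=\tilde\sigma_{k_i}$ for some permutation $\{k_1,\dots,k_N\}$ of $\{1,\dots,N\}$. This is exactly the assertion that $S$ and $\tilde S$ are permutations of one another, completing the proof. The main subtlety, and the only place requiring care, is the alignment of the zero-counts described above; once that is secured, everything reduces to packaging already-established facts, and no new estimate or computation is required. I would present the argument essentially as a short corollary-style deduction, emphasizing the reinterpretation of~\eqref{Eq_Trace_Form_Com} as~\eqref{beta} and the appeal to Lemma~\ref{dissem}.
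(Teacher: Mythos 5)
Your proof is correct and follows essentially the same route as the paper, which presents the corollary as an immediate consequence of rewriting the trace formulae \eqref{Eq_Trace_Form_Com} as the power-sum system \eqref{beta} for the $N$-tuples $S$ and $\tilde S$ and invoking Lemma~\ref{dissem}. (Your preliminary reconciliation of zero-counts is harmless but superfluous: since $0^m=0$ for $m\geq 1$, the trace formulae already yield $\sum_{i=1}^N\sigma_i^m=\sum_{i=1}^N\tilde\sigma_i^m$ for the full $N$-tuples without any prior matching of zero-counts, and the equality of the numbers of zeros then follows from the permutation conclusion rather than being needed as an input.)
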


Using the latter Corollary, for any given graph $\Gamma$ one
could, at least theoretically, try every ``allowed'' configuration
of coupling constants one by one and thus assess directly, which
of these (if any) lead to isospectral configurations.

\begin{eg}
Let $\Gamma_{\delta}$ be a decorated ``lasso'' graph of three
vertices all of them being of $\delta$ type.
\begin{center}
\includegraphics[width=.95\textwidth]{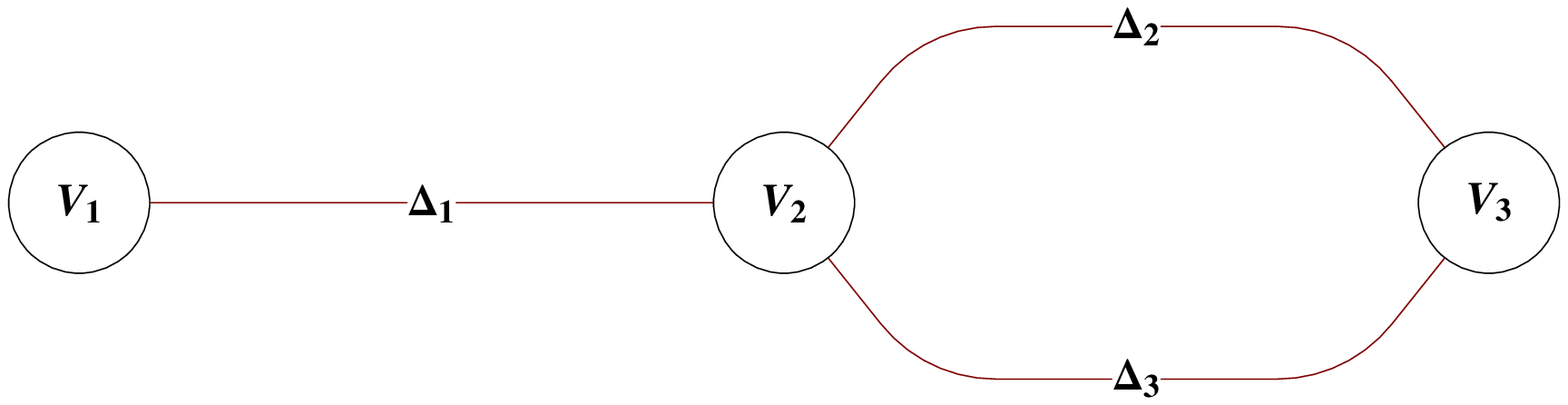}
\end{center}
Let all three edges of the graph be of length 1. Let
$\vec{\alpha}=(a,3b,2c)$ for arbitrary real $a,b,c\in\mathbb{R}$.
Then $A_{\vec{\widetilde{\alpha}}}$ is isospectral to
$A_{\vec{\alpha}}$ for $\vec{\widetilde{\alpha}}=(b,3c,2a)$
provided, that $2b=a+c$ and to
$\vec{\widetilde{\alpha}}=(c,3a,2b)$ provided, that $2a=b+c$.

Note, that both configurations $(b,3c,2a)$ and $(c,3a,2b)$ are
among allowed by Corollary \ref{Referee_demand}.

At the same time, if the lengths of graph edges are rationally
independent, there are no isospectral configurations.
\end{eg}

The \emph{proof} is an explicit computation.

This example demonstrates that if one wishes to go one step
further than Theorem \ref{Th_Trace_Form_Com} in the analysis of
isospectrality for graph Laplacians, one needs to impose some
additional restrictions on edge lengths. The analysis presented in
Section 4, based on the asymptotic behavior of Weyl-Titchmarsh
M-function, shows that the information derived from the leading
term of its asymptotics carries no information on edge lengths.
This asks for some additional considerations, to which the
remainder of the present Section is devoted.

\emph{Henceforth we will assume that all edge lengths are
rationally independent} and consider a number of examples of
quantum graphs.

\subsection{The case of a star-graph}

\begin{thm}\label{Star}
Consider a finite compact metric graph $\Gamma_{\delta}$ which is
a star, i.e., $\Gamma_{\delta}$ has exactly $N+1$, $N\geq 2$
vertices\footnote{If $N=2$, the coupling constant at the internal
vertex is w.l.o.g. assumed to be non-zero, since otherwise the
star-graph considered reduces to a single interval by graph
cleaning, see, e.g., \cite{Aharonov}.} with the only internal
vertex being $V_{N+1}$. Assume that all edge lengths are
rationally independent and that matching conditions at all
vertices are of $\delta$ type. Let $A_{\vec\alpha}$ and
$A_{\vec{\widetilde{\alpha}}}$ be two graph Laplacians on
$\Gamma_{\delta}$. Assume that their spectra coincide counting
multiplicities. Then $\vec\alpha=\vec{\widetilde{\alpha}}$.
\end{thm}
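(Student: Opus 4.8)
The plan is to feed the explicit Weyl–Titchmarsh matrix of a star into the Hadamard–factorisation argument already used for Theorem~\ref{Th_Trace_Form_Com}, and then to mine the resulting entire identity for \emph{subleading} information. The point is that the leading asymptotics underlying Theorem~\ref{Th_Trace_Form_Com} and Corollary~\ref{Referee_demand} only fix the \emph{unordered} collection $\{\sigma_j\}$, so they can never separate a genuine change of coupling from a mere relabelling of the edges; rational independence of the edge lengths is what will let us access the finer harmonics and kill this ambiguity. Concretely, I would first read off $M(\lambda)$ from Theorem~\ref{Th_Weyl_Func}: for the star it is an arrow matrix with diagonal entries $-\mu\cot\mu l_k$ ($k\le N$) and $-\mu\sum_t\cot\mu l_t$ at the central vertex $V_{N+1}$, and border entries $\mu/\sin\mu l_k$. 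Expanding the arrow determinant and setting $P_k(\mu):=\mu\cos\mu l_k+\alpha_k\sin\mu l_k$, $Q_k(\mu):=\mu^2\sin\mu l_k-\mu\alpha_k\cos\mu l_k$ gives
\[
\det\bigl(B-M(\lambda)\bigr)=-\frac{\Psi_{\vec\alpha}(\mu)}{\prod_{k=1}^N\sin\mu l_k},\qquad
\Psi_{\vec\alpha}:=\sum_{k=1}^N Q_k\prod_{j\ne k}P_j-\alpha_{N+1}\prod_{k=1}^N P_k,
\]
with $\Psi_{\vec\alpha}$ entire. Since the denominator depends only on the (common) edge lengths, the argument of Theorem~\ref{Th_Trace_Form_Com} — giving $\det(B-M)/\det(\widetilde B-M)\equiv e^a=:C$ — collapses to the clean entire identity $\Psi_{\vec\alpha}(\mu)=C\,\Psi_{\vec{\widetilde\alpha}}(\mu)$.

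Next I would expand $\Psi_{\vec\alpha}$ into exponential harmonics. Writing $\cos,\sin$ through $e^{\pm i\mu l_j}$, every term of $\Psi_{\vec\alpha}$ is a combination of the $2^N$ exponentials $e^{i\mu\langle\epsilon,l\rangle}$, $\epsilon\in\{\pm1\}^N$, and by rational independence of $\{l_j\}$ the frequencies $\langle\epsilon,l\rangle$ are pairwise distinct, so the harmonics are linearly independent over polynomials. A short computation — using that the $e^{\pm i\mu l_k}$–part of $Q_k$ is $\mp i\mu$ times that of $P_k$ — yields the coefficient
\[
c_\epsilon(\mu;\vec\alpha)=2^{-N}\Bigl(-i\mu\sum_{m=1}^{N}\epsilon_m-\alpha_{N+1}\Bigr)\prod_{\epsilon_j=+1}(\mu-i\alpha_j)\prod_{\epsilon_j=-1}(\mu+i\alpha_j).
\]
Independence of the harmonics forces $c_\epsilon(\cdot;\vec\alpha)=C\,c_\epsilon(\cdot;\vec{\widetilde\alpha})$ for every $\epsilon$; comparing the top power of $\mu$ in the all-$(+)$ harmonic gives $C=1$. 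After the substitution $w=-i\mu$ each relation becomes a genuine real polynomial identity $(E_\epsilon)$, i.e.\ an equality of two multisets of roots.

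It then remains to solve the $(E_\epsilon)$, for which only the all-$(+)$ pattern and the $N$ single-flip patterns $\epsilon^{(k)}$ (sign flipped at coordinate $k$) are needed. Taking \emph{sums of roots} gives, with $p=\sum_j\alpha_j$, the linear relations $\alpha_{N+1}/N+p=\widetilde\alpha_{N+1}/N+\widetilde p$ and, for each $k$ (assuming $N\ge3$, so that $\epsilon^{(k)}$ retains its top-degree term), $\alpha_{N+1}/(N-2)-2\alpha_k+p=\widetilde\alpha_{N+1}/(N-2)-2\widetilde\alpha_k+\widetilde p$. The flip family shows that all differences $d_k:=\widetilde\alpha_k-\alpha_k$ equal a common value $d$; substituting $\sum_k d_k=Nd$ and the definitions of $\alpha_{N+1}/N$, $\alpha_{N+1}/(N-2)$ back into the relations collapses everything to $[(2-N)^2-N^2]\,d=4(1-N)\,d=0$, whence $d=0$, then $\widetilde\alpha_{N+1}=\alpha_{N+1}$, and finally $\vec\alpha=\vec{\widetilde\alpha}$.

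The main obstacle is precisely the degeneracy left open at leading order: the all-$(+)$ harmonic alone merely reproduces the multiset equality of Corollary~\ref{Referee_demand}, so the real work is (a) making the single-flip harmonics \emph{available}, which is exactly where rational independence is indispensable to separate the $2^N$ frequencies, and (b) checking that the small linear system they generate actually \emph{closes} — it does, because $4(1-N)\ne0$ for $N\ge2$. The value $N=2$ is genuinely borderline: there the flip harmonics lose their leading term and the clean system degenerates, so one must instead compare leading coefficients to get $\widetilde\alpha_3=\alpha_3$ and then invoke the standing assumption (footnote) that the internal coupling is non-zero; without it the interchange $\widetilde\alpha_1=\alpha_2$, $\widetilde\alpha_2=\alpha_1$ would be a bona fide isospectral counterexample.
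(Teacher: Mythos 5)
Your argument is correct and reaches the intended conclusion, but it slices the key object differently from the paper. Both proofs start from the identity \eqref{Eq_Dets_8} (with $\exp(a)=1$) and both ultimately rely on rational independence of the edge lengths to separate trigonometric terms. The paper expands $\det(M(\lambda)-B)$ in powers of $\mu$ and extracts information only from the coefficient $K_N$ in \eqref{Eq_5}, which is \emph{linear} in the coupling constants: linear independence of the products of cotangents yields \eqref{Eq_K_n_1}--\eqref{Eq_K_n_2}, and the first trace formula of Theorem \ref{Th_Trace_Form_Com} is then invoked as an extra ingredient to close the linear system. You instead clear denominators and decompose the resulting entire function into the $2^N$ exponential harmonics $e^{i\mu\langle\epsilon,l\rangle}$; each harmonic carries a complete factored polynomial whose root multiset encodes the couplings, so the all-$(+)$ harmonic reproduces Corollary \ref{Referee_demand} (and hence the trace formulae) for free, while the single-flip harmonics supply the analogue of \eqref{Eq_K_n_2}. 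Your route buys two things: it makes the paper's ``standard linear independence argument'' fully explicit (products of cotangents are only independent as coefficients after multiplication by $\prod_k\sin\mu l_k$, which you perform), and it isolates the genuinely borderline case $N=2$, where the flip harmonics lose their top-degree term. There your sketch is correct but terse: after obtaining $\widetilde\alpha_3=\alpha_3$ from the leading coefficient (which needs the footnote's assumption $\alpha_3\neq0$ to keep that harmonic nonvanishing), one must still exclude the sign-swap $\widetilde\alpha_1=-\alpha_2$, $\widetilde\alpha_2=-\alpha_1$ left over from the root-multiset identity; the sum of roots of the all-$(+)$ harmonic does this, forcing $\alpha_1+\alpha_2=0$ and hence $\vec\alpha=\vec{\widetilde\alpha}$ anyway. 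It is worth noting that the paper's own closing step, comparing \eqref{Eq_K_n_3} with \eqref{Eq_K_n_4}, yields only $(N-2)\alpha_1=(N-2)\widetilde\alpha_1$ and thus also degenerates at $N=2$, so your explicit handling of that case is a genuine improvement rather than a pedantic one.
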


\begin{proof}
As in the proof of Theorem \ref{Th_Trace_Form_Com}, one
immediately ascertains the equality
\begin{equation}\label{Eq_Dets_8}
\mathrm{det}(M(\lambda)-B)=\mathrm{det}(M(\lambda)-\widetilde{B}),
\end{equation}
since in this case it is easy to see that $\exp(a)=1$ in
\eqref{Eq_F/tildeFb}. By Theorem \ref{Th_Weyl_Func},
\begin{multline*}
\mathrm{det}(M(\lambda)-B)=\\ \left|
  \begin{array}{cccc}
    -\mu\cot\mu l_1-\alpha_1 & 0 & \ldots &\mu\csc\mu l_1 \\
    0 & -\mu\cot\mu l_2-\alpha_2 & \ldots &\mu\csc\mu l_2 \\
    \vdots & \vdots & \ldots &  \vdots \\
    \mu\csc\mu l_1 & \mu\csc\mu l_2 & \ldots & -\mu\sum_{t=1}^{N}\cot\mu l_t-\alpha_{N+1} \\
  \end{array}
\right|,
\end{multline*}
where $\mu=\sqrt{\lambda}$. Calculating this determinant
explicitly, one has
$$
\det(M(\lambda)-B)=K_{N+1}(\mu)\mu^{N+1}+K_N(\mu,\vec{\alpha})\mu^N+\dots
+K_0(\mu,\vec{\alpha}),
$$
where $K_j(\mu)$ are trigonometric functions. Here
$\mu^{N+1}K_{N+1}(\mu)=\det M(\lambda)$ and (up to the sign)
\begin{equation}\label{Eq_5}
K_{N}(\mu)= \sum_{t=1}^{N+1}\alpha_t\prod_{j=1}^{N}\cot\mu
l_j-\sum_{t,i=1;\ t<i}^{N} (\alpha_t+\alpha_i)\prod_{j\neq
t,i}\cot\mu l_j,
\end{equation}
Utilizing the standard linear independence argument in conjunction
with the condition of rational independence of edge lengths, one
obtains
\begin{gather}\label{Eq_K_n_1}
\sum_{t=1}^{N+1}\alpha_t=\sum_{t=1}^{N+1}\widetilde{\alpha}_t;\\
\label{Eq_K_n_2}
\alpha_t+\alpha_i=\widetilde{\alpha}_t+\widetilde{\alpha}_i,\
t,i=\overline{1,N},\ t<i.
\end{gather}
Using the first trace formula of Theorem \ref{Th_Weyl_Func}, one
additionally has
$$
\sum_{t=1}^{N}\alpha_t+\frac{\alpha_{N+1}}{N}=\sum_{t=1}^{N}\widetilde{\alpha}_t+\frac{\widetilde{\alpha}_{N+1}}{N},
$$
which together with~\eqref{Eq_K_n_1} implies $
\alpha_{N+1}=\widetilde{\alpha}_{N+1}. $ Then~\eqref{Eq_K_n_1}
admits the form
\begin{equation}\label{Eq_K_n_3}
\sum_{t=1}^{N}\alpha_t=\sum_{t=1}^{N}\widetilde{\alpha}_t.
\end{equation}
On the other hand, \eqref{Eq_K_n_2} immediately yields
\begin{equation}\label{Eq_K_n_4}
(N-1)\alpha_{1}+\sum_{t=2}^{N}\alpha_t=(N-1)\widetilde{\alpha}_{1}+\sum_{t=2}^{N}\widetilde{\alpha}_t.
\end{equation}
Comparing \eqref{Eq_K_n_4} with~\eqref{Eq_K_n_1}, one obtains
$$
\alpha_{1}=\widetilde{\alpha}_{1}
$$
and, proceeding analogously, the same result for all
$i=\overline{2,N}$.
\end{proof}

\subsection{The case of a chain graph}
We will demonstrate that unless the chain is also a star, it no
longer siffices to consider linear relations between coupling
constants in order to ascertain the fact that there are no
isospectral graphs in this situation. In the present paper, we
will only consider the chain of exactly 4 vertices, which is
already enough to illustrate the point made. The general case of
an arbitrary compact chain can in fact be reduced to this one by a
corresponding recurrence relation.

\begin{prop}\label{A4}
Let $\Gamma_{\delta}$ be the metric $A_4$ graph with $\delta$ type
matching conditions at all vertices  and having rationally
independent edge lengths. Let $A_{\vec\alpha}$ and
$A_{\vec{\widetilde{\alpha}}}$ be two graph Laplacians on
$\Gamma_{\delta}$. Let additionally coupling constants at both
internal vertices be non-zero for both Laplacians.\footnote{Again,
this assumption just guarantees that the graph $\Gamma$ does not
reduce by the procedure of cleaning \cite{Aharonov} to either a
star or an interval.} Assume that their spectra coincide counting
multiplicities. Then $\vec\alpha=\vec{\widetilde{\alpha}}$.
\end{prop}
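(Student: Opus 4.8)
The plan is to turn the isospectrality hypothesis into an explicit identity between the two characteristic determinants and then to mine that identity, monomial by monomial, for enough algebraic relations between $\vec\alpha$ and $\vec{\widetilde\alpha}$ to force their coincidence. First I would write the Weyl--Titchmarsh matrix explicitly. The $A_4$ graph is the path $V_1-V_2-V_3-V_4$ with edges $\Delta_1,\Delta_2,\Delta_3$ of lengths $l_1,l_2,l_3$ and all vertices of $\delta$ type, so Theorem \ref{Th_Weyl_Func} gives the tridiagonal matrix
\[
M(\lambda)=\begin{pmatrix}
-\mu\cot\mu l_1 & \mu\csc\mu l_1 & 0 & 0 \\
\mu\csc\mu l_1 & -\mu(\cot\mu l_1+\cot\mu l_2) & \mu\csc\mu l_2 & 0 \\
0 & \mu\csc\mu l_2 & -\mu(\cot\mu l_2+\cot\mu l_3) & \mu\csc\mu l_3 \\
0 & 0 & \mu\csc\mu l_3 & -\mu\cot\mu l_3
\end{pmatrix}.
\]
Since every vertex is of $\delta$ type, the constant in \eqref{Eq_F/tildeFb} satisfies $\exp(a)=1$ exactly as in the proof of Theorem \ref{Star}, so isospectrality is equivalent to $\det(M(\lambda)-B)=\det(M(\lambda)-\widetilde B)$ for all $\lambda$. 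Expanding the tridiagonal determinant by the three-term recurrence yields
\[
\det(M-B)=d_1d_2d_3d_4-q_1^2d_3d_4-q_2^2d_1d_4-q_3^2d_1d_2+q_1^2q_3^2,
\]
where $q_i=\mu\csc\mu l_i$ and $d_1,\dots,d_4$ are the diagonal entries of $M-B$ (thus $d_1=-\mu\cot\mu l_1-\alpha_1$, $d_2=-\mu(\cot\mu l_1+\cot\mu l_2)-\alpha_2$, and so on).

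Next I would substitute $\csc^2=1+\cot^2$, turning each side into a polynomial in $\mu$ and in $\cot\mu l_1,\cot\mu l_2,\cot\mu l_3$; one verifies that all purely squared-cotangent contributions cancel, so in fact only the coefficients of a few explicit monomials are needed. Invoking the rational independence of $l_1,l_2,l_3$ (the standard linear-independence argument already used for Theorem \ref{Star}), I would equate, for the two determinants, the polynomial-in-$\mu$ coefficient of each distinct trigonometric monomial. Matching the $\mu^3$-coefficients attached to $\cot\mu l_2$, to $\cot\mu l_1$, and to $\cot\mu l_3$ produces the three \emph{linear} relations
\[
\alpha_1+\alpha_4=\widetilde\alpha_1+\widetilde\alpha_4,\quad
\alpha_1+\alpha_2+\alpha_4=\widetilde\alpha_1+\widetilde\alpha_2+\widetilde\alpha_4,\quad
\alpha_1+\alpha_3+\alpha_4=\widetilde\alpha_1+\widetilde\alpha_3+\widetilde\alpha_4,
\]
whence immediately $\alpha_2=\widetilde\alpha_2$ and $\alpha_3=\widetilde\alpha_3$. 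Matching lower powers of $\mu$ then supplies the \emph{nonlinear} relations $(\alpha_1+\alpha_2)\alpha_3\alpha_4=(\widetilde\alpha_1+\widetilde\alpha_2)\widetilde\alpha_3\widetilde\alpha_4$ (from the $\mu^1$-coefficient of $\cot\mu l_1$) and $\alpha_1\alpha_2\alpha_3\alpha_4=\widetilde\alpha_1\widetilde\alpha_2\widetilde\alpha_3\widetilde\alpha_4$ (from the cotangent-free $\mu^0$-term).

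At this stage the linear data pin down $\alpha_2,\alpha_3$ and the sum $\alpha_1+\alpha_4$, but leave a two-fold ambiguity corresponding to the reflection $V_1\leftrightarrow V_4$ of the chain. Resolving this ambiguity is the crux of the argument, and is precisely the phenomenon flagged before the statement, namely that linear relations no longer suffice. I would argue by contradiction: assuming $\alpha_1\neq\widetilde\alpha_1$, divide the first nonlinear relation by $\alpha_3=\widetilde\alpha_3\neq0$ and combine with $\alpha_1+\alpha_4=\widetilde\alpha_1+\widetilde\alpha_4$; factoring the resulting quadratic shows that the only admissible alternative to $\alpha_1=\widetilde\alpha_1$ is $\widetilde\alpha_1=\alpha_4-\alpha_2$, $\widetilde\alpha_4=\alpha_1+\alpha_2$. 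Substituting this into the product relation $\alpha_1\alpha_2\alpha_3\alpha_4=\widetilde\alpha_1\widetilde\alpha_2\widetilde\alpha_3\widetilde\alpha_4$ and cancelling $\alpha_2\alpha_3\neq0$ forces $\alpha_4=\alpha_1+\alpha_2$, i.e. $\widetilde\alpha_1=\alpha_1$, a contradiction. Hence $\alpha_1=\widetilde\alpha_1$, and then $\alpha_4=\widetilde\alpha_4$; together with $\alpha_2=\widetilde\alpha_2$, $\alpha_3=\widetilde\alpha_3$ this gives $\vec\alpha=\vec{\widetilde\alpha}$.

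The decisive ingredient, and the hardest point to get right, is the breaking of the reflection symmetry: generic linear combinations of the coupling constants are symmetric under $V_1\leftrightarrow V_4$, so they can never separate $\alpha_1$ from $\alpha_4$. It is exactly the hypothesis that the \emph{internal} coupling constants $\alpha_2,\alpha_3$ are nonzero that makes the asymmetric nonlinear relations effective: their nonvanishing is what allows one to cancel the common factors and distinguish the two ends of the chain, and it is also what rules out the degenerate reduction of the graph to a star or an interval. I would therefore take care to track exactly where $\alpha_2\neq0$ and $\alpha_3\neq0$ are used, as this is where the proof would fail for the reduced configurations.
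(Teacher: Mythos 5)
Your proof is correct and follows essentially the same route as the paper: the determinant identity $\det(M(\lambda)-B)=\det(M(\lambda)-\widetilde B)$, the linear relations extracted from the $\mu^3$-coefficient (yielding $\alpha_2=\widetilde\alpha_2$, $\alpha_3=\widetilde\alpha_3$ and $\alpha_1+\alpha_4=\widetilde\alpha_1+\widetilde\alpha_4$), and then nonlinear relations from lower-order coefficients to break the residual $V_1\leftrightarrow V_4$ ambiguity. The only divergence is that the paper closes the argument with the four quadratic relations supplied by $K_2(\mu,\vec\alpha)$, whereas you use the cubic and constant coefficients $K_1$ and $K_0$ (giving $(\alpha_1+\alpha_2)\alpha_3\alpha_4$ and $\alpha_1\alpha_2\alpha_3\alpha_4$) --- both choices are valid, and yours has the merit of being carried out explicitly, with the precise role of $\alpha_2\neq0$, $\alpha_3\neq0$ isolated.
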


\begin{proof} As in the proof of Theorem \ref{Star},
\eqref{Eq_Dets_8} holds. By Theorem \ref{Th_Weyl_Func}, one has
${\det}(M(\lambda)-B)=$
\begin{tiny}
\begin{equation*}
\left|
  \begin{array}{cccc}
    -\mu\cot\mu l_1-\alpha_1 & \mu\csc\mu l_1  & 0 &0 \\
    \mu\csc\mu l_1 & -\mu\sum_{n=1}^{2}\cot\mu l_t-\alpha_2 &\mu\csc\mu l_2 & 0 \\
    0 & \mu\csc\mu l_2 & -\mu\sum_{n=2}^{3}\cot\mu l_t-\alpha_3 &  \mu\csc\mu l_3 \\
    0 & 0& \mu\csc\mu l_3 & -\mu\cot\mu l_3-\alpha_4 \\
  \end{array}
\right|,
\end{equation*}
\end{tiny}where the enumeration of vertices is chosen so that that the
internal vertices are $V_2$ and $V_3$, the edge between them being
of length $l_2$, whereas the edge of length $l_1$ connects $V_2$
to $V_1$ and consequently $l_3$ connects $V_3$ and $V_4$.

Proceeding exactly as in the proof of Theorem \ref{Star}, one gets
the following linear relations between $\vec{\alpha}$ and
$\vec{\widetilde{\alpha}}$ from consideration of the coefficient
$K_3(\mu,\vec{\alpha})$:
\begin{align*}
  \alpha_1+\alpha_2+\alpha_3+\alpha_4&=\widetilde{\alpha}_1+\widetilde{\alpha}_2+\widetilde{\alpha}_3+\widetilde{\alpha}_4;\\
  \alpha_1+\alpha_2+\alpha_4&=\widetilde{\alpha}_1+\widetilde{\alpha}_2+\widetilde{\alpha}_4;\\
  \alpha_1+\alpha_4&=\widetilde{\alpha}_1+\widetilde{\alpha}_4;\\
  \alpha_1+\alpha_3+\alpha_4&=\widetilde{\alpha}_1+\widetilde{\alpha}_3+\widetilde{\alpha}_4.
\end{align*}
The matrix of this system of linear equations has rank equal to 3
with kernel $(1,0,0,-1)^T$. This of course immediately implies
$\alpha_2=\widetilde{\alpha}_2$, $\alpha_3=\widetilde{\alpha}_3,$
but still does not prove the claim. Note that the first trace
formula of Theorem \ref{Th_Trace_Form_Com} in fact follows from
the relations above and thus provides no additional information.

Consider the coefficient $K_2(\mu,\vec{\alpha})$. This equips us
with 4 quadratic relations on the coupling constants which
together with the linear relations yield the claim.
\end{proof}

\subsection{The case of mixed types with double edges}
\begin{prop}
 Let
$\Gamma_{\delta}$ be the graph of Example
\ref{Example_mexican_dupa}. Let the edge lengths $l_j$,
$j=1,\dots,4$ be rationally independent. Let $A_{\vec{\alpha}}$
and $A_{\vec{\widetilde{\alpha}}}$ be two graph Laplacians on
$\Gamma_{\delta}$. Assume that their spectra coincide counting
multiplicities. Then $\vec\alpha=\vec{\widetilde{\alpha}}$.
\end{prop}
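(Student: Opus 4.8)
The plan is to follow the template of Theorems~\ref{Star} and~\ref{A4}, exploiting the concrete structure of the graph of Example~\ref{Example_mexican_dupa}: a pendant $\delta$ vertex $V_1$ joined by the edge $l_1$ to the internal $\delta$ vertex $V_2$, which is joined to the internal $\delta'$ vertex $V_3$ by the double edge $\{l_2,l_3\}$, with a pendant $\delta'$ vertex $V_4$ attached by $l_4$. Exactly as in the proof of Theorem~\ref{Th_Trace_Form_Com}, isospectrality together with the Hadamard factorization~\eqref{Eq_F/tildeFb} yields the identity $\det(M(\lambda)-B)=c\,\det(M(\lambda)-\widetilde{B})$, valid for all $\lambda$, where $c=\exp(a)$ is fixed by the leading asymptotics as $\tau=-i\sqrt{\lambda}\to+\infty$; here, unlike the all-$\delta$ situation of Theorem~\ref{Star} where~\eqref{Eq_Dets_8} holds with $c=1$, one has in general $c\neq 1$ (generically $c=\alpha_3\alpha_4/(\widetilde{\alpha}_3\widetilde{\alpha}_4)$), so the constant must be carried through. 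I would also record at the outset the constraints furnished by Corollary~\ref{Referee_demand} and Lemma~\ref{dissem}: the multisets $S,\widetilde{S}$ are permutations of one another and the numbers of vanishing coupling constants agree.

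The key step is to expand the $4\times 4$ determinant (tridiagonal, by the explicit form of $M$ in Example~\ref{Example_mexican_dupa}) as a multilinear polynomial in the coupling constants, $\det(M-B)=\sum_{T\subseteq\{1,2,3,4\}}\bigl(\prod_{i\in T}(-\alpha_i)\bigr)\,c_T(\mu)$, where $c_T(\mu)$ is the principal minor of $M$ on the complementary index set. The crucial observation, special to the mixed-type setting, is that each entry of $M$ carries a \emph{pure} power of $\mu$ ($\mu^{+1}$ on the $\delta$--$\delta$ block, $\mu^{-1}$ on the $\delta'$--$\delta'$ block, $\mu^{0}$ on the $\delta$--$\delta'$ coupling), and that each off-diagonal pair carries exactly the same power of $\mu$ as the product of the two diagonal entries it replaces in the determinant expansion. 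Consequently each $c_T$ is homogeneous in $\mu$ of degree $|T\cap\{3,4\}|-|T\cap\{1,2\}|$, and the identity $\det(M-B)=c\,\det(M-\widetilde{B})$ splits into five independent equations, one for each value of this $\mu$-grading between $-2$ and $+2$.

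Within each graded equation I invoke the standard linear-independence argument: writing every trigonometric factor as a rational function of $z_j=e^{i\mu l_j}$, clearing the common denominators (identical on both sides), and using rational independence of the $l_j$ to match coefficients of distinct monomials $z_1^{n_1}z_2^{n_2}z_3^{n_3}z_4^{n_4}$. The extreme gradings $\pm 2$ only reproduce $\alpha_3\alpha_4=c\widetilde{\alpha}_3\widetilde{\alpha}_4$ and $\alpha_1\alpha_2=c\widetilde{\alpha}_1\widetilde{\alpha}_2$. The decisive relations come from grading $+1$: separating the part proportional to $\cot\mu l_4$ (the pendant edge at $V_4$) gives $\alpha_3+\alpha_4=c(\widetilde{\alpha}_3+\widetilde{\alpha}_4)$, the part proportional to $\cot\mu l_1\,(\tan\mu l_2\tan\mu l_3-\sec\mu l_2\sec\mu l_3)$ gives $\alpha_4=c\widetilde{\alpha}_4$, and the part proportional to $\tan\mu l_2+\tan\mu l_3$ then gives $\alpha_1\alpha_3=\widetilde{\alpha}_1\widetilde{\alpha}_3$; grading $-1$ contributes symmetrically $\alpha_1+\alpha_2=c(\widetilde{\alpha}_1+\widetilde{\alpha}_2)$. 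Combining these forces in turn $\alpha_3=\widetilde{\alpha}_3$ (from $\alpha_3\alpha_4=c\widetilde{\alpha}_3\widetilde{\alpha}_4$ and $\alpha_4=c\widetilde{\alpha}_4$), then $\alpha_1=\widetilde{\alpha}_1$, then $c=1$, and finally $\alpha_2=\widetilde{\alpha}_2$ and $\alpha_4=\widetilde{\alpha}_4$; that is, $\vec{\alpha}=\vec{\widetilde{\alpha}}$.

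The main obstacle is the bookkeeping for the double edge $\{l_2,l_3\}$, which enters $M$ only through the symmetric combinations $\tan\mu l_2+\tan\mu l_3$ and $(\sec\mu l_2+\sec\mu l_3)^2$; one must verify that these, multiplied by the $l_1$- and $l_4$-factors, remain linearly independent as functions of $\mu$ and genuinely separate the relations above. The second delicate point is the degenerate case of vanishing couplings at the $\delta'$ vertices $V_3,V_4$: there the $\mu$-grading of the top coefficients drops, the value of $c$ changes, and the divisions by $\alpha_3,\alpha_4$ (and their tildes) used above are no longer licit. This case I would treat separately, using the equality of the numbers of zero couplings supplied by Corollary~\ref{Referee_demand} and Lemma~\ref{dissem} to match the vanishing constants first, and then running the same graded argument on the surviving nonzero constants.
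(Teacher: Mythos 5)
Your proposal is correct and follows essentially the same route as the paper's own (much terser) proof: isospectrality plus the Hadamard factorization gives $\det(M(\lambda)-B)=c\,\det(M(\lambda)-\widetilde B)$ with $c=\alpha_3\alpha_4/\widetilde\alpha_3\widetilde\alpha_4$ in the non-degenerate case, the determinant is expanded in powers of $\mu$ from $\mu^2$ down to $\mu^{-2}$, and the nonlinear (in the coupling constants) coefficients $K_{\pm1}$ are resolved by the linear-independence argument under rational independence of the edge lengths. The only difference is one of exposition: you make explicit the $\mu$-grading, the specific trigonometric monomials that isolate each relation, and the treatment of vanishing couplings at the $\delta'$ vertices, all of which the paper compresses into ``explicit calculation shows \dots these suffice'' and ``the remaining cases differ only in minute details.''
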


\begin{proof}
Assume first that $\alpha_3,\alpha_4,
\tilde\alpha_3,\tilde\alpha_4\not=0$. As in the proof of Theorem
\ref{Star}, one has
$$
\frac{1}{\alpha_3\alpha_4}\det(M(\lambda)-\mathrm{diag}\{\alpha_1,\dots,\alpha_4\})
=\frac{1}{\tilde\alpha_3\tilde\alpha_4}\det(M(\lambda)-\mathrm{diag}\{\tilde\alpha_1,\dots,\tilde\alpha_4\}).
$$
By Theorem \ref{Th_Weyl_Func}, one gets the following
decomposition of
$\det(M(\lambda)-\mathrm{diag}\{\alpha_1,\dots,\alpha_4\})$:
$$
K_{2}(\mu)\mu^{2}+K_{1}(\mu,\vec{\alpha})\mu+K_{0}(\mu,\vec{\alpha})+K_{-1}(\mu,\vec{\alpha})\mu^{-1}+K_{-2}(\mu)\mu^{-2}
$$
with trigonometric coefficients $K_j$. The only difference with
Theorem \ref{Star} is that here negative powers of $\mu$ appear.
What's more, explicit calculation shows that both $K_1$ and
$K_{-1}$ already are non-linear w.r.t. the coupling constants.
Nevertheless, one ascertains almost immediately that these suffice
to complete the proof. Consideration of the remaining cases
differs only in minute details.
\end{proof}

We remark that the same result holds for the graph
$\Gamma_{\delta}$ if all vertices are assumed to be of the same
type (either $\delta$ or $\delta'$). Thus, a graph possessing
double edges may prevent isospectrality.

\subsection{Isospectrality can happen if even cycles are allowed}

We will show that a cycle $C_4$ of four vertices even in the
situation of rationally independent edge lengths allows for
existence of isospectral Laplacians.

\begin{prop}\label{C4}
Let $\Gamma_{\delta}=C_4$ with all four vertices of $\delta$ type
and rationally independent edge lengths $l_j,$ $j=1,\dots,4$.
Assume that $A_{\vec\alpha}$ and $A_{\vec{\widetilde{\alpha}}}$
are two graph Laplacians on $\Gamma_{\delta}$. Let additionally
coupling constants at all vertices be non-zero for both
Laplacians.\footnote{This assumption guarantees that the graph
$\Gamma$ does not reduce by the procedure of cleansing
\cite{Aharonov} to a cycle of lower dimension.} Assume that their
spectra coincide counting multiplicities. Then
$\vec\alpha=\vec{\widetilde{\alpha}}$, unless
$\vec{\alpha}=(\alpha,-\alpha,\alpha,-\alpha)$ and
$\vec{\widetilde{\alpha}}=(-\alpha,\alpha,-\alpha,\alpha)$ for
some real $\alpha$. In the latter case $A_{\vec\alpha}$ and
$A_{\vec{\widetilde{\alpha}}}$ are isospectral.
\end{prop}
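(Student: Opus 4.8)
The plan is to follow the same scheme as in Theorems \ref{Star} and \ref{A4}. Since all four vertices have degree $2$ and there are no $\delta'$ vertices, the leading term of $\det(B-M(\lambda))$ as $\tau=-i\sqrt\lambda\to+\infty$ is $\prod_{i=1}^4(\alpha_i+2\tau)\sim 16\tau^4$, which is independent of $\vec\alpha$; hence $\exp(a)=1$ in \eqref{Eq_F/tildeFb}, and isospectrality is equivalent to the identity $\det(M(\lambda)-B)=\det(M(\lambda)-\widetilde B)$ for all $\lambda$. By Theorem \ref{Th_Weyl_Func} the matrix $M(\lambda)$ for $C_4$ is cyclically tridiagonal: writing $A_i$ for the $i$th diagonal entry (a multiple of $\mu$ by a sum of the two cotangents at $V_i$) and $b_j=\mu\csc\mu l_j$ for the off-diagonal entries, the determinant expands as $a_1a_2a_3a_4-\sum_j b_j^2a_ka_l+b_1^2b_3^2+b_2^2b_4^2-2b_1b_2b_3b_4$ with $a_i=A_i-\alpha_i$, the last term being the hallmark of the cycle. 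I would then regard $\det(M-B)$ as a polynomial in $\vec\alpha$ whose coefficients are explicit trigonometric functions of $\mu$, exploit rational independence of $l_1,\dots,l_4$ to match coefficients of the independent trigonometric monomials, and grade the whole identity by the total degree in the coupling constants.

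First I would read off the degree-one part, whose coefficients are the four complementary principal $3\times3$ minors of $M$. The crucial observation is that after substituting $\csc^2\mu l_j=1+\cot^2\mu l_j$ each of them collapses to the \emph{same} fully symmetric function $\mu^3\bigl(e_3(\cot\mu l_\cdot)-e_1(\cot\mu l_\cdot)\bigr)$, so the degree-one identity yields only $\sum_i\alpha_i=\sum_i\widetilde\alpha_i$. This collapse is the analytic signature of the cycle's symmetry and explains why linear relations alone, which sufficed for the star and the chain, no longer determine $\vec\alpha$. The degree-two part is where the work lies: its six coefficients are the $2\times2$ complementary minors, and again $\csc^2=1+\cot^2$ cancels all squared cotangents, leaving only square-free quadratic monomials and constants. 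Matching these produces an overdetermined homogeneous linear system for the differences $\epsilon_{ij}=\alpha_i\alpha_j-\widetilde\alpha_i\widetilde\alpha_j$ whose only solution is $\epsilon_{ij}=0$, i.e. every pairwise product is preserved. Setting $\rho_i=\widetilde\alpha_i/\alpha_i$ (legitimate since all constants are nonzero) this reads $\rho_i\rho_j=1$ for $i\neq j$, which forces $\rho_1=\dots=\rho_4=\pm1$; hence $\vec{\widetilde\alpha}=\vec\alpha$ or $\vec{\widetilde\alpha}=-\vec\alpha$.

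It remains to separate the two alternatives, which the degree-three part does: its coefficients are (up to sign) the diagonal entries $A_i$, and matching forces $\eta_i:=\prod_{j\neq i}\alpha_j-\prod_{j\neq i}\widetilde\alpha_j$ to follow the alternating pattern $(t,-t,t,-t)$. In the branch $\vec{\widetilde\alpha}=-\vec\alpha$ one has $\eta_i=2\prod_{j\neq i}\alpha_j=2e_4/\alpha_i$ with $e_4=\prod\alpha_i\neq0$, so the pattern forces $\alpha_1=\alpha_3$, $\alpha_2=\alpha_4$ and $\alpha_1=-\alpha_2$, i.e. precisely $\vec\alpha=(\alpha,-\alpha,\alpha,-\alpha)$ and $\vec{\widetilde\alpha}=(-\alpha,\alpha,-\alpha,\alpha)$; the degree-four identity $\prod\alpha_i=\prod\widetilde\alpha_i$ is then automatic. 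For the converse I would check directly that this configuration is isospectral: using $A_1+A_3=A_2+A_4$ (an identity peculiar to the cycle, both sides equal to $-\mu\sum_j\cot\mu l_j$) together with $A_1A_3-A_2A_4=\mu^2(\cot\mu l_1-\cot\mu l_3)(\cot\mu l_2-\cot\mu l_4)$, a short computation appealing once more to $\csc^2=1+\cot^2$ shows that the $\alpha$-odd part of $\det(M-B)$ vanishes; since $\widetilde B=-B$ here, this gives $\det(M-B)=\det(M+B)=\det(M-\widetilde B)$.

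I expect the degree-two bookkeeping to be the main obstacle: one must organize the six quadratic minors, carry out the $\csc^2=1+\cot^2$ cancellations without error, and verify that the resulting linear system in the $\epsilon_{ij}$ has only the trivial solution. It is exactly here that the cyclic incidence pattern (shared off-diagonal entries and the constants produced by $\csc^2-1$) conspires to leave just enough independent monomials to force all pairwise products to agree while stopping short of forcing equality. The same family of cancellations underlies the converse computation, so getting the degree-two step right essentially secures both directions.
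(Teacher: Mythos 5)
Your proposal is correct and follows essentially the same route as the paper's proof: the reduction to $\det(M(\lambda)-B)=\det(M(\lambda)-\widetilde B)$ via $\exp(a)=1$, coefficient matching graded by degree in the coupling constants (equivalently by powers of $\mu$) using rational independence, the conclusion $\alpha_i\alpha_j=\widetilde\alpha_i\widetilde\alpha_j$ from the rank-$6$ quadratic system, the alternating pattern from the rank-$3$ cubic system, and a direct verification of the exceptional configuration. The only difference is in the endgame: you deduce $\vec{\widetilde\alpha}=\pm\vec\alpha$ immediately from the preserved pairwise products via $\rho_i=\widetilde\alpha_i/\alpha_i$ and use the cubic relations only to pin down the alternating structure in the minus branch, whereas the paper parameterizes the cubic system's solution space by $\beta$ and splits into the cases $\beta=0$ and $\beta\neq 0$; your organization is a slight streamlining that reaches the same endpoint.
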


\begin{proof}
As in the proof of Theorem \ref{Star}, \eqref{Eq_Dets_8} holds.
Calculating this determinant explicitly on the basis of Theorem
\ref{Th_Weyl_Func}, one has
$$
\det(M(\lambda)-B)=K_{4}(\mu)\mu^{4}+K_3(\mu,\vec{\alpha})\mu^3+\dots
+K_0(\mu,\vec{\alpha}),
$$
where $K_j(\mu)$ are trigonometric functions. Calculating these
functions explicitly and utilizing the standard linear
independence argument in conjunction with the condition of
rational independence of edge lengths, one obtains
\begin{equation}\label{Eq_C_1}
\alpha_1+\alpha_2+\alpha_3+\alpha_4=\widetilde{\alpha}_1+\widetilde{\alpha}_2+\widetilde{\alpha}_3+\widetilde{\alpha}_4;
\end{equation}
\begin{equation}\label{Eq_C_2}
\begin{split}
&\alpha_j\sum_{i\not=j}\alpha_i=\widetilde{\alpha}_j\sum_{i\not=j}\widetilde{\alpha}_i,\ i,j=\overline{1,4},\\
&\alpha_1\alpha_3+\alpha_1\alpha_4+\alpha_2\alpha_3+\alpha_2\alpha_4=
\widetilde{\alpha}_1\widetilde{\alpha}_3+\widetilde{\alpha}_1\widetilde{\alpha}_4+\widetilde{\alpha}_2\widetilde{\alpha}_3+
\widetilde{\alpha}_2\widetilde{\alpha}_4,\\
&\alpha_1\alpha_2+\alpha_1\alpha_3+\alpha_2\alpha_4+\alpha_3\alpha_4=
\widetilde{\alpha}_1\widetilde{\alpha}_2+\widetilde{\alpha}_1\widetilde{\alpha}_3+\widetilde{\alpha}_2\widetilde{\alpha}_4+
\widetilde{\alpha}_3\widetilde{\alpha}_4,\\
&\alpha_1\alpha_2+\alpha_1\alpha_4+\alpha_2\alpha_3+\alpha_3\alpha_4=
\widetilde{\alpha}_1\widetilde{\alpha}_2+\widetilde{\alpha}_1\widetilde{\alpha}_4+\widetilde{\alpha}_2\widetilde{\alpha}_3+
\widetilde{\alpha}_3\widetilde{\alpha}_4;
\end{split}
\end{equation}
\begin{equation}\label{Eq_C_3}
 \begin{split}
 &\alpha_1\alpha_3\alpha_4+\alpha_2\alpha_3\alpha_4=\widetilde{\alpha}_1\widetilde{\alpha}_3\widetilde{\alpha}_4+
 \widetilde{\alpha}_2\widetilde{\alpha}_3\widetilde{\alpha}_4,\\
 &\alpha_1\alpha_2\alpha_4+\alpha_1\alpha_3\alpha_4=\widetilde{\alpha}_1\widetilde{\alpha}_2\widetilde{\alpha}_4+
 \widetilde{\alpha}_1\widetilde{\alpha}_3\widetilde{\alpha}_4,\\
 &\alpha_1\alpha_2\alpha_3+\alpha_1\alpha_2\alpha_4=\widetilde{\alpha}_1\widetilde{\alpha}_2\widetilde{\alpha}_3+
 \widetilde{\alpha}_1\widetilde{\alpha}_2\widetilde{\alpha}_4,\\
 &\alpha_1\alpha_2\alpha_3+\alpha_2\alpha_3\alpha_4=\widetilde{\alpha}_1\widetilde{\alpha}_2\widetilde{\alpha}_3+
 \widetilde{\alpha}_2\widetilde{\alpha}_3\widetilde{\alpha}_4;
 \end{split}
\end{equation}
\begin{equation}\label{Eq_C_4}
  \alpha_1\alpha_2\alpha_3\alpha_4=\widetilde{\alpha}_1\widetilde{\alpha}_2\widetilde{\alpha}_3\widetilde{\alpha}_4.
\end{equation}
In variables $x_{i,j}:=\alpha_i
\alpha_j-\widetilde{\alpha}_i\widetilde{\alpha}_j$, $i,j=1,4,\
i<j$, the system \eqref{Eq_C_2} turns out to be a linear system
with matrix of rank 6. Hence,
\begin{equation}\label{Eq_Second}
\alpha_i\alpha_j=\widetilde{\alpha}_i\widetilde{\alpha}_j,\
i,j=\overline{1,4}.
\end{equation}

As for~\eqref{Eq_C_3}, one reduces this system to a linear one by
the following change of variables:
\begin{equation}\label{Eq_Terc}
\begin{split}
&x_1=\alpha_1\alpha_2\alpha_3-\widetilde{\alpha}_1\widetilde{\alpha}_2\widetilde{\alpha}_3,\
x_2=\alpha_1\alpha_2\alpha_4-\widetilde{\alpha}_1\widetilde{\alpha}_2\widetilde{\alpha}_4,\\
&x_3=\alpha_1\alpha_3\alpha_4-\widetilde{\alpha}_1\widetilde{\alpha}_3\widetilde{\alpha}_4,\
x_4=\alpha_2\alpha_3\alpha_4-\widetilde{\alpha}_2\widetilde{\alpha}_3\widetilde{\alpha}_4.
\end{split}
\end{equation}
Since the rank of the corresponding matrix is equal to 3, the
general solution of the latter system is non-trivial. Further, it
has the form $(\beta,-\beta,\beta,-\beta).$ Consider the two
possibilities separately.

(\textbf{i}) If $\beta=0$, then~\eqref{Eq_C_4} immediately implies
$\alpha_i-\widetilde{\alpha}_i=0,\ j=\overline{1,4}$.

(\textbf{ii}) Let now $\beta\neq 0$. Denoting
$u:=\alpha_2\alpha_3,\ v:=\alpha_2\alpha_4,\ w:=\alpha_3\alpha_4$
and using~\eqref{Eq_Terc} and~\eqref{Eq_Second} one has:
$$
(\alpha_1-\widetilde{\alpha}_1)u=\beta,\
(\alpha_1-\widetilde{\alpha}_1)v=-\beta,\
(\alpha_1-\widetilde{\alpha}_1)w=\beta,
$$
whence $u=-v=w,$ or equivalently
$\alpha_2\alpha_3=-\alpha_2\alpha_4=\alpha_3\alpha_4$. Thus,
$$
\alpha_3=-\alpha_4,\  -\alpha_2=\alpha_3.
$$
In the same way one might also ascertain that $\alpha_1=\alpha_3.$
Finally one gets $\alpha_1=\alpha_3=\alpha,\
\alpha_2=\alpha_4=-\alpha$,
$\widetilde{\alpha}_1=\widetilde{\alpha}_3=\widetilde{\alpha},\
\widetilde{\alpha}_2=\widetilde{\alpha}_4=-\widetilde{\alpha}.$

From~\eqref{Eq_C_4} it follows that
$|\alpha|=|\widetilde{\alpha}|.$ If
$\alpha\neq\widetilde{\alpha},$ the only remaining possibility of
isospectral graph Laplacian turns out to be
$\widetilde{\alpha}=-\alpha$.

An explicit calculation shows that in this case one indeed gets
the required property \eqref{Eq_Dets_8}.
\end{proof}

Some remarks are in order.

1. Proposition \ref{C4} admits generalization to an arbitrary
cycle $C_{2N}$, $M>1$. As for the cycle $C_2$, technically one
could prove isospectrality if and only if
$\vec{\alpha}=(\alpha,-\alpha)$ and
$\vec{\widetilde{\alpha}}=-\vec{\alpha}$, however, this does not
lead to non-trivial isospectral configurations since the
corresponding two graphs are actually identical.

2. As for the case of cycles $C_{2N+1}$, $N\geq 1$, one can
ascertain much along the same lines that such odd cycles do not
permit isospectrality.

3. In our forthcoming publication on necessary and sufficient
conditions of isospectrality, we will show that essentially there
are no other examples of isospectral graph Laplacians provided
that all graph vertices are of $\delta$ type.

\subsection*{Acknowledgements} The authors express deep gratitude
to Profs. Sergey Naboko and Yurii Samoilenko for constant
attention to their work. We would also like to cordially thank our
referees for making some very helpful comments.

\end{document}